\def\qedbox{\hbox{$\rlap{$\sqcap$}\sqcup$}}
\def\gronkRMKMOREJO{
 Indeed:
\begin{enumerate}
\item For the special choice of the null vector $v=\partial_{1'}$, ${\mathcal{J}^g}(v)$ is two-step nilpotent at any point.

\item At points with $x_{3'}\neq 0$, let $v=\partial_1 -\frac{x_{1'}}{x_{3'}}\partial_3+x_2 x_{3'}\partial_{1'}$. Then ${\mathcal{J}^g}(v)$ is three-step
nilpotent. Such a degree of nilpotency is not possible  at points with $x_{3'}=0$.

\item For the special choice of the null vector $v=\partial_1 - \frac{1}{2}(x_{1'}^2-2 x_2 x_{3'})\partial_{1'}$, ${\mathcal{J}^g}(v)$ is four-step nilpotent at any point with $x_{1'} x_{3'}\neq 0$. At points with $x_{1'}=0$ take $v=\partial_1+ x_2 x_{3'}\partial_{1'}+\partial_{3'}$, and at points with $x_{3'}=0$ take $v= \partial_1-\frac{1}{2}x_{1'}^2 \partial_{1'}+\partial_{3'}$, to get a four-step nilpotent null Jacobi operator.
\end{enumerate}}
\def\gronkSZABO{
To be more explicit. Given a non-null vector $v=\sum_{i=1}^3(\alpha_i\partial_i+\alpha_{i'}\partial_{i'})$,
the associated Szab\'{o} operator, when expressed in the coordinate basis, takes the form
\[
\nabla_v{\mathcal{J}^g}_v=\left(\begin{array}{cc}
A & 0\\
B & {}^tA
\end{array}\right),
\]
where
\[
    A=\left(\begin{array}{ccc}
    2x_{3'} \alpha_1^2\alpha_2 & -2x_{3'}\alpha_1^3 & 0\\
    2x_{3'} \alpha_1\alpha_2^2 & -2x_{3'}\alpha_1^2 \alpha_2 & 0\\
    -2\alpha_1\alpha_2(x_{1'}\alpha_1+x_{2'}\alpha_2)
        & 2\alpha_1^2(x_{1'}\alpha_1+x_{2'}\alpha_2) & 0
\end{array}\right).
\]
Hence the characteristic polynomial of the Szab\'{o} operators is
$$p_\lambda(\nabla_v{\mathcal{J}^g}_v)=\lambda^6$$ (independently of the $3\times 3$-matrix $B$), so metric  is Szab\'{o} of signature $(3,3)$
with zero eigenvalues. A further analysis shows that $\nabla_v{\mathcal{J}^g}_v$ can be one-step nilpotent, or two-step nilpotent, or three-step nilpotent, changing
even at a fixed point, and therefore the metric is not pointwise Jordan Szab\'{o}. Indeed:
\begin{enumerate}
\item For the special choice of $v=\partial_3 + \frac{1}{2}(\varepsilon-x_{3'}^2) \partial_{3'}$,
 we have $g(v,v)=\varepsilon$, and  $\nabla_v{\mathcal{J}^g}_v$ is one-step nilpotent at any point.
\item For the special choice of
$v=\partial_2 + \frac{1}{2}(\varepsilon-x_{2'}^2)\partial_{2'}+\lambda \partial_{3'}$, we have
$g(v,v)=\varepsilon$, and  choosing   appropriate $\lambda\in\mathbb{R}$  we obtain that
$\nabla_v{\mathcal{J}^g}_v$ is two-step nilpotent at any point.
\item For the special choice of
$v=\partial_{1}+\frac{1}{2}(\varepsilon-(x_{1'}^2-2 x_2 x_{3'}))\partial_{1'}+ \lambda \partial_{3'}$, we
have $g(v,v)=\varepsilon$, and  choosing appropriate $\lambda\in\mathbb{R}$  we obtain that
$\nabla_v{\mathcal{J}^g}_v$ is three-step nilpotent at any point.
\end{enumerate}}
\def\gronkJORDONBLOCK{
 More precisely,
if
$v=\sum_{i=1}^3(\alpha_i\partial_i+\alpha_{i'}\partial_{i'})$, setting
\[
\begin{array}{l}
    \xi_1 = x_{3'}^2 (\varepsilon_v
    + x_2 x_{3'} \alpha_1^2 - \alpha_1 \alpha_{1'} - \alpha_2 \alpha_{2'} + \alpha_3 \alpha_{3'})
    \\[0.035in]
    \phantom{\xi_1=}
    +  2 x_{3'} ( x_{1'} \alpha_1 +  x_{2'} \alpha_2 ) \alpha_{3'}
    + 2 \alpha_{3'}^2,
    \\[0.035in]
    \xi_2 = \alpha_2 \left(   x_{2'}^2 \alpha_2^2 \alpha_{3'}
    + x_{3'}^2 \alpha_3 (\varepsilon_v + \alpha_3 \alpha_{3'})
    +    2 \alpha_{3'} (\alpha_2 \alpha_{2'} + \alpha_3 \alpha_{3'})
    \right.
    \\[0.035in]
    \phantom{\xi_2 = \alpha_2 (}
    \left.
    +     x_{2'} x_{3'} \alpha_2 (\varepsilon_v + 2 \alpha_3 \alpha_{3'})
    \right),
\end{array}
\]
we have:
\begin{enumerate}
\item If $\alpha_1$ and $\xi_1$ do not vanish, then ${\mathcal{J}^g}(v)$ has a $3\times 3$ Jordan block.

\item If $\alpha_1=0$ and $\xi_2\neq 0$, or otherwise $\xi_1=0$ and $\alpha_1\neq 0$, then ${\mathcal{J}^g}(v)$  has a single $2\times 2$
Jordan block or has two $2\times
2$ Jordan blocks.

\item If $\alpha_1=0$ and $\xi_2=0$,  then ${\mathcal{J}^g}(v)$ is diagonalizable.
\end{enumerate}
}
\def\gronkJNF
\textstyle\frac{\varepsilon_v}{4}\operatorname{id}\right)^r,\quad
\def\gronkJORDONREMARK{
Note that $(a)$, $(b)$ and $(c)$  are possible at any point, and for spacelike or timelike unit
vectors. Indeed, the following can be easily checked:
\begin{itemize}
\item[(i)] For the special choice of  $v=\partial_1 - \frac{1}{2}(x_{1'}^2-2 x_2 x_{3'}- \varepsilon)\partial_{1'}+\lambda \partial_{3'}$, we have $g(v,v)=\varepsilon$,   $\alpha_1=1\neq 0$, and $\xi_1=\frac{1}{2} (4 \lambda^2 + 4 \lambda x_{1'} x_{3'} + x_{3'}^2 (x_{1'}^2 + \varepsilon))$. Note that, for a fixed point, $\lambda$ can be chosen so that $\xi_1\neq 0$. So $(a)$ holds.

\item[(ii)] For the special choice of $v=\partial_2 -\frac{1}{2}(x_{2'}^2-\varepsilon)\partial_{2'}+\lambda\partial_{3'}$, we have $g(v,v)=\varepsilon$, $\alpha_1=0$, and $\xi_2=(\lambda+x_{2'} x_{3'})\varepsilon$. Note that, for a fixed point,  $\lambda$ can be chosen so that $\xi_2\neq 0$. So $(b)$ holds.

\item[(iii)] For the special choice of  $v=\partial_3 + \frac{1}{2}(\varepsilon -x_{3'}^2)\partial_{3'}$, we have $g(v,v)=\varepsilon$, and $\alpha_1=\xi_2=0$. So $(c)$ holds.

\end{itemize}
\medskip
}
\newtheorem{theorem}{Theorem}[section]
\newtheorem{lemma}[theorem]{Lemma}
\newtheorem{remark}[theorem]{Remark}
\begin{document}

\title
{The geometry of modified Riemannian extensions}
\author{E. Calvi\~{n}o-Louzao, E. Garc\'{\i}a-R\'{\i}o, P. Gilkey, and  R. V\'{a}zquez-Lorenzo}
\address{C-L, G-R, V-L: Department of Geometry and Topology, Faculty of Mathematics,
University of Santiago de Compostela, 15782 Santiago de Compostela,
Spain}
\email{estebcl@edu.xunta.es, Eduardo.Garcia.Rio@usc.es, ravazlor@edu.xunta.es}
\address{G: Mathematics Department, University of Oregon, Eugene, Oregon 97403, USA }
\email{gilkey@uoregon.edu}
\thanks{Supported by projects MTM2006-01432
and PGIDIT06PXIB207054PR (Spain).}
\subjclass{53C50, 53B30}
\keywords{Affine connection, Einstein, Jacobi operator, para-Kaehler, Osserman manifold, modified Riemannian extension, Szab\'o manifold, Walker metric.}
\begin{abstract}
We show that every paracomplex space form is locally isometric to a modified Riemannian extension and give necessary and sufficient conditions so that
a modified Riemannian extension is Einstein. We exhibit Riemannian extension Osserman manifolds of signature $(3,3)$ whose Jacobi operators have
non-trivial Jordan normal form and which are not nilpotent. We present new four dimensional results in Osserman
geometry.
\end{abstract}
\maketitle

\section{Introduction}
Walker metrics are indecomposable pseudo-Riemannian metrics
which are not irreducible (i.e., they admit a null parallel
distribution). They play a distinguished role in geometry and physics
\cite{ACGL, BBI, DR, DR-GR-VL-06, Honda-Tsukada, LM, Mag1982}. Lorentzian Walker
metrics have been studied extensively in the physics literature
since they constitute the background metric of the pp-wave models
(see for example \cite{FS} and references therein). From a purely
geometric point of view, Lorentzian Walker metrics naturally appear
in the investigation  of the non-uniqueness of the metric for the Levi Civita
connection \cite{MT}, a question which is meaningless in the
positive definite case. Moreover, Walker metrics are the underlying
structure of many geometrical structures like para-Kaehler and
hypersymplectic structures (see \cite{BK, CMMS, CFG96, Hit1990, IZ2005}).

The simplest examples of non Lorentzian Walker metrics are provided by
the socalled Riemannian extensions. This construction, which relates
affine and pseudo-Riemannian geometries, associates a neutral
signature metric on $T^*M$ to any torsion free connection $\nabla$
on the base manifold $M$. Riemannian extensions have been used both to
understand questions in affine geometry and to solve curvature
problems (see for example \cite{Af, BCGHV, GKVV}). It
is a remarkable fact that Walker metrics satisfying some natural
curvature conditions are locally Riemannian extensions, thus leading
the corresponding classification problem to a task in affine
geometry as shown in \cite{C-GR-VL, D, DR}.

In this paper we introduce a modification of the usual Riemannian
extensions with special attention to the behaviour of their
curvature. The geometry of modified Riemannian extensions is much less
rigid than that of the Riemannian extensions, allowing the existence of
many non Ricci flat Einstein metrics, which can be further
specialized to be Osserman (i.e., the eigenvalues of the Jacobi
operators are constant on the unit pseudo-sphere bundles) since
their scalar curvature invariants do not vanish. In particular, we
show that any paracomplex space form is locally a modified Riemannian
extension where the corresponding torsion free connection is
necessarily flat (cf. Theorem \ref{thm-1.1}). This description seems
to be well suited to further investigations, specially for
the consideration of the Lagrangian submanifolds of paracomplex
space forms.

Modified Riemannian extensions turn out to be very useful in describing
four dimensional Walker geometry. Indeed, we show in Theorem
\ref{thm-7.2} that any self-dual Walker metric is a modified Riemannian
extension. As a consequence, a description of all four dimensional
Osserman metrics whose Jacobi operator has a non-zero double root of
its minimal polynomial is given in Theorem \ref{thm-7.3}. This
result, coupled with recent work of Derdzinski \cite{De}, completes
the classification of four dimensional Osserman metrics. Finally, as
an application of the four dimensional results, one obtains a
procedure to construct new Osserman metrics in higher dimensions, a
problem that has been a task in the field (cf. Theorem
\ref{thm-1.3}).

\section{Summary of results}

\subsection{Affine geometry} Let $R^\nabla$ be the curvature operator of a torsion free connection $\nabla$ on the
tangent bundle of a smooth manifold $M$ of dimension $n$; if $X$ and $Y$ are smooth vector fields on $M$, then
$$R^\nabla(X,Y):=\nabla_X\nabla_Y-\nabla_Y\nabla_X-\nabla_{[X,Y]}\,.$$
The {\it Ricci tensor} $\rho^\nabla$ is defined by contracting indices:
$$\rho^\nabla(X,Y):=\operatorname{Tr}\{Z\rightarrow R^\nabla(Z,X)Y\}\,.$$
In contrast to the Riemannian setting, $\rho^\nabla$ need not be a symmetric $2$-tensor field. We refer to \cite{BO} for
further details concerning curvature decompositions. We denote the symmetric and anti-symmetric Ricci tensors by:
$$
\begin{array}{l}
    \rho^{\nabla,s}(X,Y):=\textstyle\frac12\{\rho^\nabla(X,Y)+\rho^\nabla(Y,X)\}\,,
    \\[0.05in]
  \rho^{\nabla,a}(X,Y):=\textstyle\frac12\{\rho^\nabla(X,Y)-\rho^\nabla(Y,X)\}\,.$$
\end{array}
$$
\subsection{The modified Riemannian extension}  Let
$\Phi\in C^\infty(S^2(T^*M))$ be a symmetric $2$-tensor field and let
$T,S\in C^\infty(\operatorname{End}(TM))$ be $(1,1)$-tensor fields on $M$. In Section
\ref{sect-4} we will use these data to define (see Equation (\ref{eqn-4.b})) a neutral signature
pseudo-Riemannian metric
$g_{\nabla,\Phi,T,S}$ on the cotangent bundle $T^*M$ which is
called the {\it modified Riemannian extension} which is a {\it Walker metric}.  The case
$T=c\operatorname{id}$ and $S=\operatorname{id}$ is of particular importance in our treatment
and will be denoted by $g_{\nabla,c}$ if $\Phi=0$ and by $g_{\nabla,\Phi,c}$ if $\Phi\ne0$.

\subsection{Einstein geometry} We have the following result
\begin{theorem}\label{thm-1.2}
The modified Riemannian extension $g_{\nabla,\Phi,c}$ on the cotangent bundle of an $n$ dimensional affine manifold is Einstein if and only
if
$\Phi=\frac{4}{c(n-1)}\rho^{\nabla,s}$.
\end{theorem}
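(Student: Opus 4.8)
The plan is to reduce the Einstein condition $\rho^g = \frac{\tau}{2n} g$ on $T^*M$ to an identity among tensors on the base $M$, by computing the Ricci tensor of $g_{\nabla,\Phi,c}$ explicitly in the natural Walker coordinates $(x^i, x_{i'})$ on $T^*M$ adapted to the fibration $\pi\colon T^*M\to M$. First I would recall from Section \ref{sect-4} the coordinate form of $g_{\nabla,\Phi,c}$, which (by Equation (\ref{eqn-4.b}) with $T=c\operatorname{id}$, $S=\operatorname{id}$) has the block structure of a Walker metric: the $\partial_{i'}\partial_{j'}$-block vanishes, the $\partial_i\partial_{j'}$-block is (a constant multiple of) the identity, and the $\partial_i\partial_j$-block is a function on $T^*M$ built from the Christoffel symbols $\Gamma_{ij}^k$ of $\nabla$ (appearing linearly in the fibre coordinates $x_{k'}$), the tensor $\Phi_{ij}$, and the constant $c$. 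Because the fibre coordinates enter the metric only polynomially (degree $\le 1$ in the Riemannian-extension part, degree $0$ in the $\Phi$- and $c\cdot\iota\operatorname{id}$-parts), the Levi-Civita connection and curvature of $g$ are computable in closed form, and the components of $\rho^g$ split by ``weight'' in the fibre coordinates.

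The key step is then to organize $\rho^g$ by these weights. The $(i',j')$-components of $\rho^g$ vanish identically (a general feature of Walker metrics of this type), and the $(i,j')$-components are proportional to $\delta_{ij}$ with a constant of proportionality controlled by $c$ and $\tau$; matching these gives the scalar curvature $\tau$ in terms of $c$ and $n$ and forces the ``Einstein constant'' side $\frac{\tau}{2n}g$ to match the off-diagonal block automatically. The real content is in the $(i,j)$-block of $\rho^g$: it decomposes as a part linear in the $x_{k'}$ (coming from $\nabla\rho^\nabla$-type terms and the curvature of the Riemannian extension) plus a part of weight zero. The weight-one part must vanish on its own — this is where one uses that $\rho^\nabla$ is actually the Ricci tensor of a torsion-free connection, so the relevant Bianchi-type identity kills it — and the weight-zero part must equal $\frac{\tau}{2n}$ times the $(i,j)$-block of $g$, which is $-\frac{c^2}{\,?\,}\Phi_{ij}$ up to terms already accounted for. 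Solving this single tensorial equation for $\Phi$ yields $\Phi = \frac{4}{c(n-1)}\rho^{\nabla,s}$, the symmetric part appearing because $\Phi$ is by hypothesis a symmetric $2$-tensor while $\rho^\nabla$ need not be, so only $\rho^{\nabla,s}$ can be detected by the (symmetric) metric Ricci tensor.

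The main obstacle is the bookkeeping in the explicit curvature computation: one must carry the $\Gamma_{ij}^k$, their first derivatives, the quadratic $\Gamma\cdot\Gamma$ terms, and $\Phi_{ij}$ through the Koszul formula and then through $\rho^g$, keeping careful track of which combinations are tensorial on $M$ and which are coordinate artifacts that must cancel. I expect that, after the dust settles, every non-tensorial term cancels (as it must, since both sides of the Einstein equation are tensors), the weight-one $(i,j)$-part vanishes identically by the second Bianchi identity for $\nabla$ together with $\operatorname{Tr}$-contractions, and one is left precisely with a nondegenerate linear equation $\,c(n-1)\Phi_{ij} = 4\,\rho^{\nabla,s}_{ij}\,$, equivalently $\Phi = \frac{4}{c(n-1)}\rho^{\nabla,s}$. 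Conversely, substituting this $\Phi$ back shows the weight-zero $(i,j)$-block equals $\frac{\tau}{2n}$ times the metric, giving the ``if'' direction, so the two implications are really the same computation read in two directions.
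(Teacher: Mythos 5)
Your overall strategy is the same as the paper's: compute the Ricci tensor of $g_{\nabla,\Phi,c}$ in the induced Walker coordinates and reduce the Einstein condition to a tensorial identity on the base; the paper simply records the outcome of that computation, namely $\rho^g_0=\rho^g-\frac{\tau^g}{2n}g=2\,\pi^*\rho^{\nabla,s}-\frac12 c(n-1)\,\pi^*\Phi$, from which the theorem is immediate. Your remarks about the $(i',j')$-block vanishing and the $(i,j')$-block being a constant multiple of $\delta_{ij}$ (fixing $\tau^g$ as a dimensional multiple of $c$) are correct and consistent with this.

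However, your predicted resolution of the $(i,j)$-block is wrong in a way that would derail the argument if followed literally. Since $\tau^g$ is a nonzero multiple of $c$, the Einstein equation $\rho^g=\frac{\tau^g}{2n}g$ forces $\rho^g_{ij}$ to contain a fiber-quadratic part matching $\frac{\tau^g}{2n}\,c\,x_{i'}x_{j'}$ and a fiber-linear part matching $\frac{\tau^g}{2n}\,(-2x_{k'}\Gamma^\nabla_{ij}{}^k)$; you have omitted the quadratic part altogether, and the linear part does \emph{not} ``vanish on its own'' -- it is proportional to $c\,x_{k'}\Gamma^\nabla_{ij}{}^k$ and is absorbed exactly by the corresponding part of $\frac{\tau^g}{2n}g_{ij}$, with no Bianchi-type identity involved (demanding its vanishing would impose spurious conditions such as $\Gamma^\nabla=0$). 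Likewise, for the pure Riemannian-extension contribution the absence of fiber-dependent Ricci terms is elementary (because $\partial_{i'}g_{jk}=-2\Gamma^\nabla_{jk}{}^i$ is fiber-constant there), not a consequence of the second Bianchi identity. The correct bookkeeping is that \emph{all} fiber-dependent terms of $\rho^g_{ij}$ are accounted for by $\frac{\tau^g}{2n}g_{ij}$, and the fiber-constant remainder is precisely $2\rho^{\nabla,s}_{ij}+(\frac{\tau^g}{2n}-\frac12 c(n-1)-\frac{\tau^g}{2n})$-type bookkeeping collapsing to the displayed trace-free Ricci; its vanishing gives $\Phi=\frac{4}{c(n-1)}\rho^{\nabla,s}$, and this single identity indeed yields both implications. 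You also never determine the coefficient (the ``$?$'' in your draft), which is the actual content of the computation.
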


\subsection{Para-Kaehler geometry} Our fundamental result
concerning such geometry is the following which illustrates the importance of the modified Riemannian extension:
\begin{theorem}\label{thm-1.1}
A para-Kaehler metric of non-zero constant
para holomorphic sectional curvature $c$ is locally isometric to the
cotangent bundle of an affine manifold which is
equipped with the modified Riemannian extension $g_{\nabla,c}$ where
$\nabla$ is a flat connection.
\end{theorem}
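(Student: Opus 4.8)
The plan is to exploit the two parallel null distributions carried by a para-Kaehler structure, pass to adapted Walker coordinates, and then match the resulting local form of the metric against the definition of $g_{\nabla,c}$ in Section \ref{sect-4}; the flatness of $\nabla$ will drop out of the space-form curvature identity.

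First I would record that on a para-Kaehler manifold $(M^{2n},g,J)$ the conditions $\nabla^{g}J=0$ and $g(J\cdot,J\cdot)=-g(\cdot,\cdot)$ make the eigendistributions $\mathcal{D}_{\pm}=\ker(J\mp\operatorname{id})$ parallel, complementary and totally isotropic of rank $n$. Hence $(M,g)$ is a Walker manifold, and there are local coordinates $(x^{1},\dots,x^{n},x_{1'},\dots,x_{n'})$ adapted to $\mathcal{D}_{-}=\operatorname{span}\{\partial_{x_{i'}}\}$ in which
\[
g=\sum_{i,j}B_{ij}(x,x')\,dx^{i}\,dx^{j}+2\sum_{i}dx^{i}\,dx_{i'},\qquad B_{ij}=B_{ji},
\]
and, using that $\mathcal{D}_{+}$ is parallel as well, the para-complex structure is normalized to $J\partial_{x_{i'}}=-\partial_{x_{i'}}$, $J\partial_{x^{i}}=\partial_{x^{i}}-\sum_{j}B_{ij}\partial_{x_{j'}}$.

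Next I would impose that $R^{g}$ has the constant para-holomorphic sectional curvature form
\[
R^{g}(X,Y)Z=\tfrac{c}{4}\bigl\{g(Y,Z)X-g(X,Z)Y+g(JY,Z)JX-g(JX,Z)JY-2g(JX,Y)JZ\bigr\}.
\]
Substituting the Walker expression for $R^{g}$ (whose components are universal expressions in the first and second derivatives of $B$) into this identity produces a system of partial differential equations for the $B_{ij}$. The core of the argument is to integrate it: the $x_{k'}$-dependence of $B_{ij}$ is forced to be a polynomial of degree two, whose degree-two part is exactly the fibre-quadratic tensor built from $T=c\operatorname{id}$ appearing in $g_{\nabla,c}$, whose degree-one part has coefficients $\Gamma^{k}_{ij}(x)$ symmetric in $i,j$ and therefore defining a torsion free connection $\nabla$ on the base, and whose $x'$-independent part $\beta_{ij}(x)$ is, by the remaining curvature equations, pure gauge and hence removable by a fibre translation $x_{i'}\mapsto x_{i'}+f_{i}(x)$, so that after this change $\Phi=0$. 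Comparing with Equation (\ref{eqn-4.b}) identifies $g$ with $g_{\nabla,c}$. Finally, the still-unused curvature equations are precisely those carrying the affine curvature $R^{\nabla}$ of $\nabla$, and the space-form identity forces them to vanish, so $R^{\nabla}=0$, i.e. $\nabla$ is flat. An alternative, perhaps shorter, route runs in the converse direction: since para-Kaehler space forms of a fixed dimension and fixed $c\neq0$ are locally unique, it suffices to verify by a direct curvature computation that $g_{\nabla,c}$ with $\nabla$ the standard flat connection on $\mathbb{R}^{n}$ is para-Kaehler of constant para-holomorphic sectional curvature $c$ — that is, to exhibit the explicit model.

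I expect the main obstacle to be the integration step together with the recognition of the answer: one must check that the non-affine part of $B$ is the $T=c\operatorname{id}$ term on the nose (with no freedom in its scalar factor), that the $O(x')$-part is the Riemannian extension of a genuine torsion free connection, and that the base-only remainder is killed up to a coordinate change. Once $g$ is known to be $g_{\nabla,c}$, deducing that $\nabla$ is flat — whether from the leftover curvature components, or from Theorem \ref{thm-1.2} together with a short computation of the full curvature of $g_{\nabla,c}$ — is comparatively routine.
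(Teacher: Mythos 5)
Your proposal is essentially sound, but your \emph{primary} route is not the one the paper takes; interestingly, the ``alternative, perhaps shorter, route'' you mention at the end is exactly the paper's proof. The paper argues in the converse direction: it takes $\nabla$ flat with vanishing Christoffel symbols, writes down $g_{\nabla,c}=2\,dx^i\circ dx^{i'}+c\,x_{i'}x_{j'}\,dx^i\circ dx^j$ together with $\Omega=dx^i\wedge dx^{i'}$ and the induced $J$ (with $J\partial_i=\partial_i-c\,x_{i'}x_{j'}\partial_{j'}$, $J\partial_{i'}=-\partial_{i'}$), checks that $N_J=0$ and $d\Omega=0$ so that $(T^*M,g,J)$ is para-Kaehler, and then computes the curvature explicitly to verify $R^g(JX,X)X=c\,g(X,X)\,JX$; the theorem then follows from the local rigidity of paracomplex space forms with fixed $c\neq 0$ (left implicit in the paper, and likewise invoked in your alternative route). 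Your main route instead integrates the space-form curvature identity directly in Walker coordinates adapted to the two parallel Lagrangian distributions. That approach is viable and has the merit of not quoting the uniqueness theorem for paracomplex space forms, and it would in principle recover the normal form of Equation (\ref{eqn-4.b}) intrinsically; but it is computationally much heavier, and several of its pivotal steps are asserted rather than established: (i) the simultaneous normalization in which $g$ is in Walker form \emph{and} $\Omega=dx^i\wedge dx^{i'}$ (equivalently $J\partial_i=\partial_i-B_{ij}\partial_{j'}$), which is a Darboux--Weinstein-type adaptation to the Lagrangian foliation $\mathcal{D}_-$ that needs an argument; (ii) the integration showing that the fibre dependence of $B$ is quadratic with top term exactly $c\,x_{i'}x_{j'}$ and linear term the Riemannian extension of a torsion free $\nabla$; and (iii) the removal of the base term by a fibre translation, which perturbs both the quadratic completion and the Christoffel symbols and so must be tracked carefully (alternatively it can be shortcut via the Einstein condition of Theorem \ref{thm-1.2}, which for flat $\nabla$ forces $\Phi=0$). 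In short: your alternative route coincides with the paper's economical model-plus-rigidity argument, while your primary route is a genuinely different, more self-contained but considerably longer computation whose key normalization and integration steps would still have to be carried out in detail.
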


\subsection{Osserman geometry} We can use the modified Riemannian extension to exhibit the following examples which extend previous results in signature $(2,2)$
(see Theorem 2.1 of
\cite{DGV1}) -- it has long been a task in this field to build examples of Osserman manifolds which were not nilpotent and which exhibited non-trivial
Jordan normal form.

\begin{theorem}\label{thm-1.3}
Let $M=\mathbb{R}^3$ and let $\nabla$ be the torsion free connection whose only non-zero
Christoffel symbol is $\nabla_{\partial_{1}}\partial_{1}=x_2\partial_{3}$. Let $g:=g_{\nabla,1}$ on $T^*M$. Then $g$ is an Osserman metric of
signature $(3,3)$ with eigenvalues $\{0,1,\frac14,\frac14,\frac14,\frac14\}$ which is neither spacelike Jordan Osserman nor timelike Jordan Osserman
at any point. The Jacobi operator is neither diagonalizable nor nilpotent for a generic tangent vector.
\end{theorem}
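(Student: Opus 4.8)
The plan is to carry everything out explicitly in the Walker coordinates $(x_1,x_2,x_3,x_{1'},x_{2'},x_{3'})$ on $T^*\mathbb{R}^3$ underlying the Riemannian extension, with $(x_{i'})$ the fibre coordinate dual to $\partial_{x_i}$. First I would write out $g=g_{\nabla,1}$ by unwinding the definition of the modified Riemannian extension (Equation~(\ref{eqn-4.b}) in Section~\ref{sect-4}) with $c=1$, $\Phi=0$, $S=\operatorname{id}$ and the single Christoffel symbol $\Gamma_{11}^3=x_2$: this gives $g(\partial_i,\partial_{j'})=\delta_{ij}$, $g(\partial_{i'},\partial_{j'})=0$, $g(\partial_i,\partial_j)=x_{i'}x_{j'}$ for $i\neq j$, $g(\partial_2,\partial_2)=x_{2'}^2$, $g(\partial_3,\partial_3)=x_{3'}^2$ and $g(\partial_1,\partial_1)=x_{1'}^2-2x_2x_{3'}$. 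In the coordinate frame the metric consists of a symmetric $3\times3$ block $Q$, the identity $I_3$ off the diagonal, and a zero block, from which, independently of $Q$, the metric $g$ has neutral signature $(3,3)$. A short computation shows that $\rho^{\nabla}=0$, so $\nabla$ is non-flat but Ricci-flat; hence by Theorem~\ref{thm-1.2} the metric $g$ is Einstein, so $\operatorname{Tr}{\mathcal{J}^g}(v)$ is a fixed multiple of $\varepsilon_v:=g(v,v)$, and a small part of the curvature computation identifies that multiple, namely $\operatorname{Tr}{\mathcal{J}^g}(v)=2\varepsilon_v$. In particular the scalar curvature does not vanish, which is precisely what permits the Jacobi operators to have non-zero eigenvalues.

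Next I would compute the Levi-Civita connection of $g$ from the Koszul formula and then the curvature tensor $R^g$ (alternatively, by invoking the general formulae for the curvature of a modified Riemannian extension in terms of $\nabla$, $R^\nabla$ and $\rho^\nabla$), and assemble, for a vector $v=\sum_{i=1}^3(\alpha_i\partial_i+\alpha_{i'}\partial_{i'})$, the Jacobi operator ${\mathcal{J}^g}(v)\colon X\mapsto R^g(X,v)v$ as a $6\times6$ matrix in the coordinate frame, with entries polynomial in the $\alpha$'s and the $x$'s. By homogeneity ${\mathcal{J}^g}(tv)=t^2{\mathcal{J}^g}(v)$, so it suffices to handle unit $v$; this assembly is the computational heart of the proof.

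To establish the Osserman property and the eigenvalue list I expect the characteristic polynomial of ${\mathcal{J}^g}(v)$ to be $\lambda(\lambda-\varepsilon_v)(\lambda-\tfrac{\varepsilon_v}{4})^{4}$, independently of the base point; normalising $\varepsilon_v=1$ on the spacelike pseudosphere (resp. $\varepsilon_v=-1$ on the timelike one) this yields the eigenvalues $\{0,1,\tfrac14,\tfrac14,\tfrac14,\tfrac14\}$ (resp. their negatives), which is the Osserman condition. Rather than expand a $6\times6$ determinant I would introduce the operators $\mathcal{A}^r(v)={\mathcal{J}^g}(v)({\mathcal{J}^g}(v)-\varepsilon_v\operatorname{id})({\mathcal{J}^g}(v)-\tfrac{\varepsilon_v}{4}\operatorname{id})^{r}$ and verify two polynomial identities: that $\mathcal{A}^3(v)$ vanishes identically, so the eigenvalues lie in $\{0,\varepsilon_v,\tfrac{\varepsilon_v}{4}\}$ with every Jordan block of $\tfrac{\varepsilon_v}{4}$ of size at most $3$; and that $\mathcal{A}^2(v)\neq0$ for generic $v$, so the algebraic multiplicity of $\tfrac{\varepsilon_v}{4}$ is at least $3$. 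Since $v$ is always a $0$-eigenvector, the multiplicity of $0$ is at least $1$; combined with $\operatorname{Tr}{\mathcal{J}^g}(v)=2\varepsilon_v$, an elementary count forces the multiplicities of $0$, $\varepsilon_v$, $\tfrac{\varepsilon_v}{4}$ to be $1$, $1$, $4$ exactly. This pins the characteristic polynomial on a Zariski-dense subset of each pseudosphere, hence everywhere on it by continuity. In particular ${\mathcal{J}^g}(v)$ always has the non-zero eigenvalue $\varepsilon_v$ and so is never nilpotent.

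Finally, the reduced forms of $\mathcal{A}(v):=\mathcal{A}^1(v)$ and $\mathcal{A}^2(v)$ give the minimal polynomial of ${\mathcal{J}^g}(v)$, hence the partition of the multiplicity-$4$ eigenvalue $\tfrac{\varepsilon_v}{4}$ into Jordan blocks as a function of $v$; concretely there is an explicit quadric $\xi_1(v)$ and a companion function $\xi_2(v)$ such that ${\mathcal{J}^g}(v)$ has a $3\times3$ Jordan block whenever $\alpha_1\xi_1\neq0$ and is diagonalizable whenever $\alpha_1=\xi_2=0$. For generic (in particular non-null) $v$ one has $\alpha_1\xi_1\neq0$, so ${\mathcal{J}^g}(v)$ is then neither diagonalizable nor nilpotent, which is the last assertion of the statement. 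To see that $g$ is neither spacelike nor timelike Jordan Osserman at any point, it then suffices to exhibit, at each point and for each sign $\varepsilon=\pm1$, two unit vectors with $g(v,v)=\varepsilon$ and different Jordan type: $v=\partial_3+\tfrac12(\varepsilon-x_{3'}^2)\partial_{3'}$ has $\alpha_1=\xi_2=0$ and hence a diagonalizable Jacobi operator, while $v=\partial_1-\tfrac12(x_{1'}^2-2x_2x_{3'}-\varepsilon)\partial_{1'}+\lambda\partial_{3'}$ has $\alpha_1=1$ and $\xi_1=\tfrac12(4\lambda^2+4\lambda x_{1'}x_{3'}+x_{3'}^2(x_{1'}^2+\varepsilon))$, a non-trivial quadratic in $\lambda$, so for suitable $\lambda$ the Jacobi operator has a $3\times3$ Jordan block. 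The only serious obstacle lies in the second and third steps: verifying $\mathcal{A}^3(v)\equiv0$, the value of $\operatorname{Tr}{\mathcal{J}^g}(v)$, and the reduced forms of $\mathcal{A}(v)$ and $\mathcal{A}^2(v)$ requires manipulating very large polynomials and is best done with computer algebra, whereas the linear-algebra extraction of the spectrum and of the Jordan type from these is short.
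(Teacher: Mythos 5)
Your proposal is correct, and its computational core coincides with the paper's proof: both work in the explicit Walker coordinates, compute $R^g$ and the $6\times 6$ Jacobi operator for $v=\sum_i(\alpha_i\partial_i+\alpha_{i'}\partial_{i'})$, use the operators $\mathcal{A}^r(v)={\mathcal{J}^g}(v)({\mathcal{J}^g}(v)-\varepsilon_v\operatorname{id})({\mathcal{J}^g}(v)-\frac{\varepsilon_v}{4}\operatorname{id})^r$ to control the Jordan type, and exhibit at each point the same two families of unit vectors, $\partial_3+\frac12(\varepsilon-x_{3'}^2)\partial_{3'}$ (diagonalizable) and $\partial_1-\frac12(x_{1'}^2-2x_2x_{3'}-\varepsilon)\partial_{1'}$ (with or without the extra $\lambda\partial_{3'}$ term; non-diagonalizable), to rule out spacelike and timelike Jordan Osserman at every point. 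Where you genuinely differ is in how the eigenvalues are obtained: the paper computes the characteristic polynomial $\lambda(\lambda-1)(\lambda-\frac14)^4$ of $\widetilde{\mathcal{J}^g}(v)$ directly from the assembled matrix, whereas you derive it from $\mathcal{A}^3(v)=0$, the generic non-vanishing of $\mathcal{A}^2(v)$, the fact that $v\in\ker{\mathcal{J}^g}(v)$, and the trace, via an integrality count plus continuity on each pseudosphere. This is a legitimate and somewhat leaner route (no $6\times6$ determinant, and $\mathcal{A}(v),\mathcal{A}^2(v)$ are needed anyway for the Jordan analysis), but two points should be made explicit: Theorem \ref{thm-1.2} only gives that $g$ is Einstein, so the constant in $\operatorname{Tr}{\mathcal{J}^g}(v)=2\varepsilon_v$ (equivalently $\tau=n(n+1)c=12$) still requires a curvature computation, as you indicate; and the passage from generic to all unit vectors needs that the coefficients of the characteristic polynomial depend polynomially on $v$ and that $\alpha_1\xi_1$ is not identically zero on either connected pseudosphere, which your explicit vector with a suitable choice of $\lambda$ provides. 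With these noted, your argument yields the same conclusions as the paper, including non-nilpotency (the eigenvalue $\varepsilon_v\neq0$ always occurs) and generic non-diagonalizability (a $3\times3$ Jordan block whenever $\alpha_1\xi_1\neq0$).
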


\subsection{Notational conventions} We shall let $\mathcal{M}:=(M,\nabla)$ denote an affine manifold and $R$ the
associated curvature operator. Similarly, we shall let $\mathcal{N}:=(N,g)$ denote a pseudo-Riemannian manifold,
$\nabla^g$ denote the associated Levi-Civita connection, and $R^g$ the associated curvature operator. If $\xi_i\in T^*N$, then the symmetric
product is defined by
$$\xi_1\circ\xi_2:=\textstyle\frac12(\xi_1\otimes\xi_2+\xi_2\otimes\xi_1)\in
S^2(T^*N).$$

\subsection{Outline of the paper} In Section \ref{sect-2}, we establish notation and recall some basic definitions in the geometry of the curvature
operator. We shall also discuss Osserman, Szab\'o, and Ivanov--Petrova geometry. In Section
\ref{sect-3} we present a brief introduction to Walker geometry and
in Section \ref{sect-4} we define the modified Riemannian extension and establish Theorem
 \ref{thm-1.2}. In Section
\ref{sect-5} we discuss para-Kaehler geometry and prove Theorem
\ref{thm-1.1}. In Section \ref{sect-7} we present some results in
four dimensional geometry with a description of four dimensional Osserman metrics
whose Jacobi operators have a non-zero double root of its minimal polynomial, as a
generalization of paracomplex space forms (cf. Theorem \ref{thm-7.3}).
We conclude in Section \ref{sect-6} by proving Theorem \ref{thm-1.3}
and discussing some additional results for this six dimensional example that relate to Szab\'o and
Ivanov--Petrova geometry.
Throughout, we shall adopt the {\it
Einstein convention} and sum over repeated indices. We shall
suppress many of the technical details in the interests of brevity
in giving various proofs in this paper -- further details are
available from the authors upon request.

\section{The geometry of the curvature operator}\label{sect-2}

\subsection{Osserman geometry}  Let
$\mathcal{J}^g(X):Y\rightarrow R^g(Y,X)X$ be the Jacobi operator on a pseudo-Riemannian manifold $\mathcal{N}$ of
signature $(p,q)$. One says that $\mathcal{N}$ is {\it timelike Osserman} (resp. {\it spacelike Osserman}) if the eigenvalues of $\mathcal{J}^g$ are
constant on the pseudo-sphere of unit timelike (resp. spacelike) vectors; these are equivalent concepts if $p>0$ and $q>0$ \cite{BBG,GKVa,GKVV}.
Similarly, we say that
$\mathcal{N}$ is {\it timelike Jordan Osserman} or {\it spacelike Jordan Osserman} if the Jordan normal form of $\mathcal{J}^g$ is
constant  on the appropriate pseudo-sphere; these are in general not equivalent concepts.

\subsection{Szab\'o geometry} A pseudo-Riemannian manifold $\mathcal{N}$ is said to be \emph{Szab\'{o}} if the {\it Szab\'o operator}
$\mathcal{S}(X):Y\rightarrow\nabla_XR^g(Y,X)X$ has constant eigenvalues on
$S^{\pm}(TN)$
\cite{G-I-S}. Any Szab\'{o} manifold is locally symmetric in the Riemannian \cite{Szabo} and the Lorentzian  \cite{G-S} setting but
the higher signature case supports examples with nilpotent Szab\'{o} operators (cf. \cite{G-I-S} and the references therein).

\subsection{Ivanov--Petrova geometry}
For any oriented non-degenerate $2$-plane
$\pi$, the \emph{skew-symmetric curvature operator} $\mathcal{R}^g(\pi)$ of the Levi-Civita connection is defined by
\[
   \mathcal{R}^g(\pi)=\left|g(X,X) g(Y,Y) - g(X,Y)^2\right|^{-1/2}R^g(X,Y)\,;
\]
$\mathcal{R}^g(\pi)$ is a skew-adjoint operator which is independent of the oriented
basis $\{ X,Y\}$ of $\pi$. $\mathcal{N}$ is said to be spacelike (respectively, timelike or mixed)
\emph{Ivanov--Petrova} if the eigenvalues of
$\mathcal{R}^g(\pi)$ are constant on the appropriate Grassmannian (see \cite{IP2-98,Z-00}). If $p\ge2$ and $q\ge2$, these are equivalent
conditions
\cite{G1-01} so one simply says the metric is Ivanov--Petrova in this setting.

\section{Walker geometry}\label{sect-3}
A \emph{Walker manifold} is a triple $(N,g,\mathcal{D})$, where $N$
is an $n$ dimensional manifold, where $g$ is a pseudo-Riemannian metric of signature $(p,q)$ on $N$, and where
$\mathcal{D}$ is an $r$ dimensional parallel null distribution.

\subsection{Geometrical contexts} Walker
metrics appear as the underlying structure of several specific
pseudo-Riemann\-ian structures. For instance, indecomposable metrics
which are not irreducible play a distinguished role in investigating
the holonomy of indefinite metrics. Those metrics are naturally
equipped with a Walker structure (see for example \cite{BBI} and the
references therein). Einstein hypersurfaces in indefinite real space
forms with two-step nilpotent shape operators \cite{Mag1982} are
Walker.  Similarly, locally conformally flat manifolds with nilpotent
Ricci operator are Walker manifolds \cite{Honda-Tsukada}. Also, non-trivial conformally
symmetric manifolds (i.e., neither symmetric nor locally conformally
flat) may only occur in the pseudo-Riemannian setting and they are
Walker manifolds \cite{DR}.

\subsection{Neutral signature Walker manifolds} Of special interest are those manifolds admitting a field of null
planes of maximum dimension $r=\frac{n}{2}$. This is the case of
para-Kaehler \cite{IZ2005} and hyper-symplectic structures
\cite{Hit1990}. Note that, in opposition to the non-degenerate case,
the complementary distribution to a parallel degenerate plane field
is not necessarily parallel (even not integrable). Moreover,
parallelizability of the complementary plane field is indeed
equivalent to the existence of a para-Kaehler structure \cite{BBI}.
Note that any four dimensional Osserman  manifold of neutral signature whose
Jacobi operators have a non-zero double root of their minimal
polynomial is necessarily  Walker with a parallel field of planes of
maximal dimensionality \cite{BBR01,DR-GR-VL-06}.

\subsection{Walker coordinates}
 Walker \cite{W-50} (see also the discussion in \cite{DR1}) constructed simplified local coordinates in this setting. We
shall restrict our attention to neutral signature $(p,p)$. Let $(N^n,g,\mathcal{D})$ be a Walker manifold of signature $(p,p)$ where
$\dim\{\mathcal{D}\}=p$. There are local
coordinates $(x_1, \dots, x_p, x_{1'},\dots,
x_{p'})$ so that
\begin{equation}\label{eqn-2.a}
g= 2\, dx^i\circ dx^{i'}+B_{ij}dx^i\circ dx^j\,.
\end{equation}
Here $B$ is a symmetric matrix and
the parallel degenerate distribution is given by
$$\mathcal{D}=\operatorname{Span}\{\partial_{1^\prime},...,\partial_{p^\prime}\}
\quad\text{where}\quad\partial_i:=\textstyle\frac{\partial}{\partial x_i}\quad\text{and}\quad\partial_{i'}:=\frac\partial{\partial x_{i'}}\,.$$

\subsection{ The Christoffel symbols of $\nabla^g$} We sum over $1\le s\le p$:
\medbreak\qquad
$\Gamma^g_{ij}{}^{k}=-\frac{1}{2} \partial_{k'} g_{ij}$,\qquad\qquad
$\Gamma^g_{i'j}{}^{k'}=\frac{1}{2} \partial_{i'} g_{jk}$,
\medbreak\qquad
$\Gamma^g_{ij}{}^{k'}=\frac{1}{2}
        \left(
        -\partial_{k} g_{ij} + \partial_{j} g_{ik} + \partial_{i} g_{jk}
        +g_{ks}  \partial_{s'} g_{ij}
        \right)$.

\subsection{ The Riemann curvature tensor of $\nabla^g$}\label{Sect-TRC}
 We sum over $1\le s\le p$, $1\le t\le p$:
\medbreak\qquad
$R^g_{ijk}{}^{h} = -\frac{1}{2}\left(
        \partial_{i}\partial_{h'} g_{jk}
        -\partial_{j}\partial_{h'} g_{ik}
        \right)
        -\frac{1}{4}
        \left(\partial_{s'} g_{ik} \partial_{h'} g_{js}
        -\partial_{s'}g_{jk} \partial_{h'} g_{is}
        \right)$,
\medbreak\qquad
$R^g_{ijk}{}^{h'}  =
        -\frac{1}{2}\left(
                \partial_j\partial_k g_{ih} - \partial_j\partial_h g_{ik}
                + \partial_i\partial_h g_{jk} - \partial_i\partial_k
                g_{jh}\right)$
\smallbreak\qquad\qquad $-\frac{1}{4}\left\{\partial_{s'} g_{ik}\left(
        \partial_{h} g_{js} - \partial_{s} g_{jh} - \partial_{j} g_{sh}-g_{ht}  \partial_{t'} g_{js}
        \right)\right.$
\smallbreak\qquad\qquad $- \partial_{s'} g_{jk}
        \left(
        \partial_{h} g_{is} - \partial_{s} g_{ih} - \partial_{i} g_{sh}
        -g_{ht}  \partial_{t'} g_{is}
        \right)$
\smallbreak\qquad\qquad$-\partial_{s'} g_{jh}\left(
        \partial_{s} g_{ik} - \partial_{k} g_{is} - \partial_{i} g_{ks}
        -g_{st}  \partial_{t'} g_{ik}
        \right)$
\smallbreak\qquad\qquad $+\partial_{s'} g_{ih}
        \left(
        \partial_{s} g_{jk} - \partial_{k} g_{js} - \partial_{j} g_{ks}
        - g_{st}  \partial_{t'} g_{jk}
        \right)$
\smallbreak\qquad\qquad $\left.
        +2 \partial_j \left( g_{hs} \partial_{s'}g_{ik}\right)
        -2 \partial_i \left( g_{hs} \partial_{s'}g_{jk}\right)
        \right\}$,
\medbreak\qquad
$R^g_{i'jk}{}^{h} = -\frac{1}{2} \partial_{i'}\partial_{h'} g_{jk}$,
\medbreak\qquad
$R^g_{i'jk}{}^{h'}  =
    -\frac{1}{2}\left(\partial_h\partial_{i'}g_{jk}-\partial_k\partial_{i'}g_{jh}\right)
    $\smallbreak\qquad\qquad$
      -\frac{1}{4} \left(\partial_{s'}g_{jk}\partial_{i'}g_{sh}+\partial_{s'}g_{jh}\partial_{i'}g_{sk}
        -2 \partial_{i'}(g_{hs}\partial_{s'}g_{jk}) \right)$,
\medbreak\qquad
$R^g_{ijk'}{}^{h'} = -\frac{1}{2}\left( \partial_{j}\partial_{k'}
            g_{ih}
            - \partial_{i}\partial_{k'} g_{jh}
            \right)
            -\frac{1}{4} \left(
         \partial_{k'} g_{is} \partial_{s'} g_{jh}
            - \partial_{k'} g_{js} \partial_{s'} g_{ih}
         \right)$,
\medbreak\qquad
$R^g_{i'jk'}{}^{h'} = \frac{1}{2} \partial_{i'}\partial_{k'} g_{jh}$.

\section{Modified Riemannian Extensions}\label{sect-4}
\subsection{The geometry of the cotangent bundle}
We refer to \cite{YI-73} for further details concerning the material of this Section.
Let $T^*M$ be the cotangent bundle of an $n$ dimensional manifold $M$
and let $\pi:T^*M\rightarrow M$ be the projection. Let $\tilde{p}=(p,\omega)$ where $p\in M$ and $\omega\in T_p^*M$ denote a point of $T^*M$.
Local coordinates $(x_i)$ in a neighborhood $U$ of $M$ induce
coordinates $(x_i,x_{i'})$ in $\pi^{-1}(U)$ where we decompose
$$\omega=\sum x_{i'}dx^i\,.$$

For each vector field $X$ on $M$, define a function $\iota
X:T^*M\rightarrow\mathbb{R}$ by
$$\iota X(p,\omega)=\omega(X_p)\,.$$
We may expand
$X=X^j\partial_j$ and express:
$$\iota X(x_i,x_{i'})=\sum x_{i'}X^i\,.$$
\begin{lemma}\label{lem-4.1}
 Let $\tilde Y,\tilde Z\in C^\infty(T(T^*M))$ be smooth vector fields on $T^*M$. Then $\tilde Y=\tilde Z$ if and only if
$\tilde Y(\iota X)=\tilde Z(\iota X)$ for all smooth vector fields $X\in C^\infty(TM)$.
\end{lemma}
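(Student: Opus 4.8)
The plan is to prove the substantive implication: if $\tilde Y(\iota X)=\tilde Z(\iota X)$ for every $X\in C^\infty(TM)$, then $\tilde Y=\tilde Z$; the converse is immediate. By linearity of $\iota$ and of $\tilde W\mapsto \tilde W(f)$ it suffices to set $\tilde W:=\tilde Y-\tilde Z$ and show that $\tilde W(\iota X)=0$ for all $X$ forces $\tilde W=0$. Since equality of vector fields is a local (indeed pointwise) matter, I would fix induced coordinates $(x_i,x_{i'})$ on $\pi^{-1}(U)$ as in the excerpt and write $\tilde W=\sum_i a^i\partial_i+\sum_i a^{i'}\partial_{i'}$ with $a^i,a^{i'}\in C^\infty(\pi^{-1}(U))$. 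The locally defined vector fields used as test objects below may be multiplied by a bump function $\phi$ equal to $1$ near any prescribed point of $U$ without affecting the conclusion there, since $\iota(\phi X)=\phi\,\iota X$.

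There are then only two computations to carry out. Applying the hypothesis to $X=\partial_j$ gives $\iota\partial_j=x_{j'}$, hence $0=\tilde W(x_{j'})=a^{j'}$; thus every vertical component of $\tilde W$ vanishes identically on $\pi^{-1}(U)$. Applying it next to $X=x_k\partial_j$ gives $\iota X=x_k\,x_{j'}$, hence $0=\tilde W(x_k x_{j'})=x_{j'}a^k+x_k a^{j'}=x_{j'}a^k$, using the previous step. Therefore $x_{j'}a^k\equiv0$ on $\pi^{-1}(U)$ for all indices $j,k$.

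It remains to conclude $a^k\equiv0$. At any $\tilde p=(p,\omega)\in\pi^{-1}(U)$ with $\omega\neq0$, some fibre coordinate $x_{j'}(\tilde p)$ is non-zero, so $a^k(\tilde p)=0$; that is, each $a^k$ vanishes on the complement of the zero section. Assuming $\dim M\ge1$ (the case $\dim M=0$ being vacuous), the zero section has positive codimension in $T^*M$, so its complement is dense; by continuity of $a^k$ we get $a^k\equiv0$. Hence $\tilde W=0$ on every chart, so $\tilde Y=\tilde Z$.

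The only point needing a moment's thought — everything else is bookkeeping — is this last step. Since $\iota X$ is affine-linear along the fibres, its differential restricted to the zero section spans only the vertical cotangent directions, so the family of functions $\iota X$ cannot by itself detect the horizontal components $a^k$ at zero-section points; the relation $x_{j'}a^k\equiv0$ together with smoothness (equivalently, density of the complement of the zero section) is precisely what bridges that gap. I expect no real difficulty beyond arranging this observation cleanly.
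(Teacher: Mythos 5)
Your proof is correct. Note that the paper itself offers no argument for this lemma --- it is stated without proof, with the reader referred to Yano--Ishihara \cite{YI-73} for the material of that section --- and your computation (testing against $\iota\partial_j=x_{j'}$ and $\iota(x_k\partial_j)=x_kx_{j'}$ in induced coordinates, after localizing with a bump function) is exactly the standard argument one would supply. You also correctly identify and close the one genuine subtlety, namely that the differentials $d(\iota X)$ only span the vertical directions along the zero section, so the horizontal components $a^k$ must be killed by the relation $x_{j'}a^k\equiv 0$ together with continuity (density of the complement of the zero section), or equivalently by differentiating that relation in $x_{j'}$.
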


Let $X\in C^\infty(TM)$ be a vector field on $M$. The {\it complete lift} $X^C$ is, by Lemma \ref{lem-4.1}, characterized by the identity
$$X^C(\iota Z)=\iota [X,Z]\quad\text{for all}\quad Z\in C^\infty(TM)\,.$$
We then have $T_{(p,\omega)}(T^*M)=\{X^C_{p,\omega}:X\in C^\infty(TM)\}$ and consequently
\begin{lemma}
A $(0,s)$-tensor field on $T^*M$ is characterized by its evaluation on complete lifts of vector fields on $M$.
\end{lemma}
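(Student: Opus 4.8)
The plan is to obtain the Lemma as an immediate consequence of the pointwise spanning statement $T_{(p,\omega)}(T^*M)=\{X^C_{(p,\omega)}:X\in C^\infty(TM)\}$ displayed just above it, together with the tensorial (pointwise, $C^\infty$-multilinear) nature of a $(0,s)$-tensor field. The assertion to be proved unwinds to the following: if $\Theta$ and $\Theta'$ are $(0,s)$-tensor fields on $T^*M$ with $\Theta(X_1^C,\dots,X_s^C)=\Theta'(X_1^C,\dots,X_s^C)$ as functions on $T^*M$ for all $X_1,\dots,X_s\in C^\infty(TM)$, then $\Theta=\Theta'$. Replacing $\Theta-\Theta'$ by $\Psi$, this is equivalent to showing that a $(0,s)$-tensor field $\Psi$ on $T^*M$ with $\Psi(X_1^C,\dots,X_s^C)\equiv 0$ for all $X_i$ vanishes identically.

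First I would fix $\tilde p=(p,\omega)\in T^*M$ and arbitrary tangent vectors $v_1,\dots,v_s\in T_{\tilde p}(T^*M)$. By the spanning statement, each $v_i$ equals $(X_i)^C_{\tilde p}$ for a suitable global vector field $X_i$ on $M$; and by tensoriality, $\Psi_{\tilde p}(v_1,\dots,v_s)$ depends only on $v_1,\dots,v_s$ and coincides with the value at $\tilde p$ of the function $\Psi(X_1^C,\dots,X_s^C)$, which is identically zero by hypothesis. Hence $\Psi_{\tilde p}=0$, and since $\tilde p$ is arbitrary, $\Psi=0$. No induction on $s$ is needed: this is just the elementary fact that a multilinear form on a vector space is determined by its values on tuples drawn from a spanning set.

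The only substantive input is the spanning statement, which is where I expect the actual work to lie. Using Lemma \ref{lem-4.1} and the defining identity $X^C(\iota Z)=\iota[X,Z]$ one computes, in the induced coordinates $(x_i,x_{i'})$, that $X^C=X^i\partial_i-x_{k'}(\partial_i X^k)\partial_{i'}$ for $X=X^i\partial_i$; then $(\partial_k)^C=\partial_k$, and for $X=f\,\partial_l$ with $f$ an appropriate (shifted) coordinate function one recovers the vertical directions $\partial_{i'}$, so the complete lifts evaluated at $\tilde p$ do span $T_{\tilde p}(T^*M)$. The one delicate point is the zero section $\omega=0$, where every vertical contribution $x_{k'}(\partial_i X^k)\partial_{i'}$ degenerates: there the argument above shows only that $\Psi$ vanishes on the open dense set $\{\omega\neq 0\}$, and one then concludes $\Psi\equiv 0$ by continuity of tensor fields. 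Since the excerpt already asserts the spanning statement at every $(p,\omega)$, I would simply invoke it and record this caveat only for completeness.
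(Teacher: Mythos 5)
Your argument is exactly the paper's (implicit) one: the lemma is stated as an immediate consequence of the displayed spanning fact $T_{(p,\omega)}(T^*M)=\{X^C_{(p,\omega)}:X\in C^\infty(TM)\}$ together with pointwise multilinearity, which is what you do. Your extra remark about the zero section (where the complete lifts only span the horizontal directions, so one concludes by density/continuity) is a legitimate refinement the paper glosses over, but it does not change the route.
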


Let $T$ be a tensor field of  type $(1,1)$ on $M$, i.e. $T\in C^\infty(\operatorname{End}(TM))$. We define a $1$-form $\iota(T)\in
C^\infty(T^*(T^*M))$ which is characterized by the identity
\begin{equation}\label{eqn-4.a}
\iota(T)(X^C)=\iota(TX)\,.
\end{equation}

\subsection{The Riemannian extension}
Let $\nabla$ be a torsion free affine connection on $M$. The {\it Riemannian extension} $g_\nabla$ is
the pseudo-Riemannian metric $g_\nabla$ on $N:=T^*M$ of neutral signature $(n,n)$ characterized by the identity:
$$g_\nabla(X^C,Y^C)=-\iota(\nabla_XY+\nabla_YX)\,.$$
Let $\nabla_{\partial_i}\partial_j=\Gamma^\nabla_{ij}{}^k\partial_k$ give the Christoffel symbols of the connection $\nabla$. Then:
$$g_\nabla= 2\, dx^i\circ dx^{i'}-2x_{k'}\Gamma^\nabla_{ij}{}^kdx^i\circ dx^j\,.$$

Riemannian extensions were originally defined by Patterson and Walker
\cite{PWalker} and further investigated in \cite{Af} thus relating
pseudo-Riemannian properties of $T^*M$ with the affine structure of
the base manifold $(M,\nabla)$. Moreover, Riemannian extensions were also
considered in \cite{GKVV} in relation to Osserman manifolds (see
also \cite{D}).

\subsection{The modified Riemannian extension}
Let $\Phi\in C^\infty(S^2(T^*M))$ be a symmetric $(0,2)$-tensor field on $M$ and let
$T,S\in C^\infty(\operatorname{End}(TM))$ be  tensor fields
of type $(1,1)$ on $M$.
The \emph{modified Riemannian extension} is the neutral signature metric on
$T^*M$ defined by
$$g_{\nabla,\Phi,T,S}:=\iota T\circ\iota S + g_\nabla+\pi^*\Phi\,.$$
In a system of local coordinates one has
\begin{equation}\label{eqn-4.b}
g_{\nabla,\Phi,T,S}= 2\, dx^i\circ dx^{i'}+\{  \mbox{$\frac12$} x_{r'} x_{s'} (T_i^r S_j^s + T_j^r S_i^s)
 + \Phi_{ij}(x) -2x_{k'}\Gamma^\nabla_{ij}{}^k\} dx^i\circ dx^j\,.
\end{equation}
The case when $T=c\operatorname{id}$ and $S=\operatorname{id}$ is important and plays a central role in our treatment. More precisely, if
$$g_{\nabla,\Phi,c}:=
c\cdot\iota\operatorname{id}\circ\iota\operatorname{id}+g_\nabla+\pi^*\Phi,$$
then one has in a system of local coordinates that
\begin{equation}\label{eqn-4.c}
g_{\nabla,\Phi,c}= 2\, dx^i\circ dx^{i'}+\{ c\, x_{i'} x_{j'}  + \Phi_{ij}(x) -2x_{k'}\Gamma^\nabla_{ij}{}^k\} dx^i\circ dx^j\,.
\end{equation}
These metrics are Walker metrics on $T^*M$ where the tensor $B_{ij}(x,x^\prime)$ of Equation (\ref{eqn-2.a})
is a quadratic function of $x^\prime$ (and affine if $c=0$). The parallel degenerate distribution $\mathcal{D}=\ker(\pi_*)$ and the
scalar curvature is a suitable multiple (depending on the dimension) of the parameter $c$.

The modified Riemannian extensions $g_{\nabla,\Phi,0}$ have been used in \cite{BCGHV} to construct Kaehler and para-Kaehler Osserman metrics with one-side bounded
(para) holomorphic sectional curvature.

\subsection{The proof of Theorem \ref{thm-1.2}}
Let $g=g_{\nabla,\Phi,c}=c\cdot\iota\operatorname{id}\circ\iota\operatorname{id}+g_\nabla+\pi^*\Phi$ and let $\tau^g$ be the scalar curvature. The trace free Ricci tensor $\rho_0^g=\rho^g-\frac{\tau^g}{2n}g$ can then  be determined to be
$$
   \rho_0^g = 2\, \pi^*\rho^{\nabla,s} -  \textstyle\frac12c(n-1)  \pi^*\Phi.
$$
Theorem \ref{thm-1.2} now follows.\hfill\qedbox

\section{Para-Kaehler manifolds}\label{sect-5}

A para-Kaehler  manifold is a symplectic manifold $N$ admitting two
transversal Lagrangian foliations
(see \cite{CFG96, IZ2005}). Such a structure induces a
decomposition of the tangent bundle $TN$ into the Whitney sum of
Lagrangian subbundles $L$ and $L'$, that is, $TN=L\oplus L'$. By
generalizing this definition, an almost para-Hermitian manifold is
defined to be an almost symplectic manifold $(N,\Omega)$ whose
tangent bundle splits into the Whitney sum of Lagrangian subbundles.
This definition implies that the $(1,1)$-tensor field $J$ defined by
$J=\pi_L-\pi_{L'}$ is an almost paracomplex structure, that is
$J^2=\operatorname{id}$ on $N$, such that $\Omega(JX,JY)$ $=$ $-\Omega(X,Y)$ for
all vector fields $X$, $Y$ on $N$, where $\pi_L$ and $\pi_L'$ are
the projections of $TN$ onto $L$ and $L'$, respectively. The 2-form
$\Omega$ induces a non-degenerate $(0,2)$-tensor field $g$ on $N$
defined by $g(X,Y)$ $=$ $\Omega(X,JY)$, where $X$, $Y$ are vector
fields on $N$. Now the relation between the almost paracomplex and
the almost symplectic structures on $N$ shows that $g$ defines a
pseudo-Riemannian metric of signature $(n,n)$ on $N$ and moreover, one has that
$g(JX,Y)+g(X,JY)=0$, where $X$, $Y$ are vector fields on $N$.  We refer to \cite{CFG96} for further details on paracomplex
geometry.

The special significance of the para-Kaehler condition is
equivalently stated  in terms of the parallelizability of the
paracomplex structure with respect to the Levi-Civita connection of
$g$, that is $\nabla^g J=0$.  The $\pm$ eigenspaces $\mathcal{D}_\pm$ of the paracomplex structure $J$ are null distributions. Moreover,
since
$J$ is parallel in the para-Kaehler setting, the distributions $\mathcal{D}_\pm$ are parallel. This shows that any para-Kaehler structure
$(g,J)$ has necessarily an underlying Walker metric.

\subsection{Proof of Theorem \ref{thm-1.1}}  Choose an affine manifold $(M,\nabla)$, with $\nabla$ a flat connection on $M$.
We normalize the choice of coordinates so the associated Christoffel symbols vanish. Consider the modified Riemannian extension
$$
g=g_{\nabla,c}=2\, dx^i\circ dx^{i'}+ c\, x_{i'}x_{j'}dx^i\circ dx^j\,.
$$
Let $\Omega=dx^i\wedge dx^{i'}$ be the symplectic form.
Now, the associated almost para-Hermitian structure $J$, defined by
$\Omega(X,Y)=g(JX,Y)$, is given by (sum over $j$)
\medbreak\qquad
   $ J\partial_i   =   \partial_i - c\, x_{i'} x_{j'}
    \partial_{j'}$,
    \qquad
    $J\partial_{i'}  =  - \partial_{i'}$,
\medbreak\noindent
for $i=1,\dots,n$, and a direct calculation shows that the Nijenhuis
tensor $$N_J(X,Y)=[JX,JY]-J[JX,Y]-J[X,JY]-[X,Y]=0\,.$$
As $J$
is integrable, and as $\Omega$ is closed,
$(T^*M,g,J)$ is a para-Kaehler manifold.

We finish the proof showing that $(T^*M,g,J)$ has constant
para holomorphic sectional curvature $c$. First, a straightforward
calculation from the expressions for the curvature given in Section \ref{Sect-TRC} shows that the
non-null components of the curvature tensor of the Levi-Civita connection are determined by (we do not sum over repeated indices)

\medbreak\noindent\begin{tabular}{llll}
    $R^g_{ij\alpha}{}^{i}=\frac{c^2}{4} x_{j'}  x_{\alpha'}$,&\hspace*{-.2cm}
    $R^g_{i'ii}{}^i=-c$,&\hspace*{-.2cm}
    $R^g_{i'ik}{}^k=\frac{-c}{2}$,  &\hspace*{-.2cm}
    $R^g_{i'ji}{}^j=\frac{-c}{2}$,
\\[0.1in]
    $R^g_{i'ii}{}^{i'}=c^2 x_{i'}^2$,&\hspace*{-.2cm}
    $R^g_{i'ik}{}^{k'}=\frac{c^2}{2} x_{k'}^2$,&\hspace*{-.2cm}
    $R^g_{i'ii}{}^{h'}=\frac{3c^2}{4} x_{i'} x_{h'}$,&\hspace*{-.2cm}
    $R^g_{i'ik}{}^{i'}=\frac{3c^2}{4} x_{i'} x_{k'}$,
\\[0.1in]
    $R^g_{i'ik}{}^{h'}=\frac{c^2}{2} x_{k'} x_{h'}$,&\hspace*{-.2cm}
    $R^g_{i'ji}{}^{i'}=\frac{c^2}{2} x_{i'} x_{j'}$,  &\hspace*{-.2cm}
    $R^g_{i'\alpha i}{}^{h'}=\frac{c^2}{4} x_{\alpha'} x_{h'}$, &\hspace*{-.2cm}
    $R^g_{i'\alpha k}{}^{i'}=\frac{c^2}{4} x_{\alpha'} x_{k'}$,
\\[0.1in]
    $R^g_{iji'}{}^{\alpha'}= \frac{-c^2}{4} x_{j'} x_{\alpha'}$,&\hspace*{-.2cm}
    $R^g_{i'ii'}{}^{i'}=c$,&\hspace*{-.2cm}
    $R^g_{i'ik'}{}^{k'}=\frac{c}{2}$, &\hspace*{-.2cm}
    $R^g_{i'jj'}{}^{i'}=\frac{c}{2}$,
\end{tabular}
\medbreak\noindent
adding the symmetry condition $R^g_{abc}{}^d=-R^g_{bac}{}^d$, and where $i$,
$j$, $k$, $h$ are different indexes in $\{1,\dots,n\}$. Now, for any
vector field $X=(\alpha_i \partial_i +
\alpha_{i'}\partial_{i'})$, a long but straightforward calculation
from the above relations shows that (sum over $i$ and $j$)
\medbreak\qquad
    $R^g(JX,X)X = -\varepsilon_X\,
     \left(
    \alpha_i  R^g_{i'ii}{}^i   \partial_i
    +2 \alpha_i   R^g_{i'ji}{}^{i'} \partial_{j'}
    -\alpha_i  R^g_{i'ii}{}^{i'} \partial_{i'}
    -\alpha_{i'} R^g_{i'ii}{}^i \partial_{i'}
    \right)$
\medbreak\qquad\qquad
     $ = c\, \varepsilon_X\, \left( \alpha_i\partial_i
        - \alpha_i c\, x_{i'}  x_{j'}\partial_{j'}
        - \alpha_{i'}\partial_{i'} \right)$
      $= c\, \varepsilon_X\, \left( \alpha_i J\partial_i
        + \alpha_{i'} J\partial_{i'} \right)$.
\medbreak\noindent
Thus, $R^g(JX,X)X=c \,g(X,X)\, JX$, showing that $(T^*M,g,J)$ has
constant para holomorphic sectional curvature $c$ and finishing the
proof.\hfill\qedbox

\section{Four Dimensional geometry}\label{sect-7}

\subsection{Self-dual Walker metrics}

In the particular case of $n=4$, we choose suitable coordinates
$(x_1,x_2,x_{1'},x_{2'})$ where the Walker metric takes the form:
\begin{equation}\label{eqn-7.a}
    g = 2\, dx^1\circ dx^{1'}+ 2\,  dx^2\circ dx^{2'}
     +\, a \, dx^{1}\circ dx^{1}  + b\,  dx^{2}\circ dx^{2} + 2\, c\, dx^{1}\circ dx^{2},
\end{equation}
for some functions $a$, $b$ and $c$ depending on the coordinates
$(x_1,x_2,x_{1'},x_{2'})$.

Considering the Riemann curvature tensor as an endomorphism of $\Lambda^2(N)$, we have the
following $O(2,2)$-decomposition
$$
R^g\equiv \frac{\tau^g}{12}\;\operatorname{id}_{\Lambda^2} +\rho^g_0 + W^g:\,\,\Lambda^2 \rightarrow \Lambda^2,
$$
where $W^g$ denotes the Weyl conformal curvature tensor, $\tau^g$ the scalar curvature, and $\rho_0^g$ the traceless Ricci tensor,
$$\rho_0^g(X,Y)=\rho^g(X,Y)-\frac{\tau^g}{4}\,g(X,Y)\,.$$
The Hodge star operator
$\star:\Lambda^2\rightarrow \Lambda^2$ associated with any {metric of signature $(2,2)$} induces a further
splitting $\Lambda^2 =\Lambda^2_+\oplus\Lambda^2_-$, where $\Lambda^2_\pm$ denotes the $\pm
1$-eigenspaces of the Hodge star operator, that is
$$\Lambda^2_\pm=\{\alpha\in\Lambda^2(N):
\star\alpha=\pm\alpha\}\,.$$
 Correspondingly, the curvature tensor decomposes as
$$
R^g\equiv
\frac{\tau^g}{12}\;\operatorname{id}_{\Lambda^2} + \rho^g_0 + W^g_++W^g_-\quad\text{for}\quad W^g_\pm=\textstyle\frac{1}{2}\;(W^g\pm\star W^g)\,.
$$
Recall that a pseudo-Riemannian $4$-manifold is called \emph{self-dual} (resp.,
\emph{anti-self-dual}) if $W^g_-=0$ (resp., $W^g_+=0$).

Self-dual Walker metrics have been previously investigated in \cite{DR-GR-VL-06} (see also
\cite{DM}), where a local description in Walker coordinates of such metrics was obtained.
As a further application, self-dual Walker metrics    can be completely described in terms
of the modified   Riemannian extension as follows.

\begin{theorem}\label{thm-7.2}
A four dimensional Walker metric   is self-dual if and only
if it is locally isometric to the cotangent bundle $T^*\Sigma$ of an affine
surface $(\Sigma,\nabla)$, with metric tensor
$$
g=\iota X(\iota \operatorname{id}\circ\iota \operatorname{id})+ \iota \operatorname{id}\circ\iota
T +g_{\nabla}+\pi^*\Phi\,
$$
where $X$, $T$, $\nabla$ and $\Phi$ are a vector field, a
$(1,1)$-tensor field, a torsion free affine connection and a
symmetric $(0,2)$-tensor field on $\Sigma$, respectively.
\end{theorem}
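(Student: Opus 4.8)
The plan is to prove Theorem \ref{thm-7.2} by a two-way argument, with the hard direction being the reconstruction of the affine data from a self-dual Walker metric. First I would recall from \cite{DR-GR-VL-06} the local normal form for self-dual Walker metrics in the coordinates of Equation (\ref{eqn-7.a}): self-duality forces the defining functions $a$, $b$, $c$ to have a specific polynomial dependence on the fibre coordinates $x_{1'},x_{2'}$ — explicitly, $a$, $b$, $c$ are at most quadratic in $(x_{1'},x_{2'})$, with the quadratic part of a rank-one, ``perfect-square'' type tied to a single function, the linear part encoding a $(1,1)$-tensor field, and the purely $x$-dependent part free. Matching this against the coordinate expression (\ref{eqn-4.b}) for $g_{\nabla,\Phi,T,S}$, specialized to $S=\operatorname{id}$ and $T$ allowed to vary, together with the extra $\iota X(\iota\operatorname{id}\circ\iota\operatorname{id})$ term (which in coordinates contributes $X^k x_{k'} x_{i'} x_{j'}\,dx^i\circ dx^j$, a fibre-cubic term), identifies: the coefficient of the cubic-in-$x'$ part with the vector field $X$ on $\Sigma$, the coefficient of the quadratic part with the symmetrized product $x_{r'}x_{s'}(T_i^r\delta_j^s+T_j^r\delta_i^s)$ hence with $T$, the coefficient of the $x'$-linear part with the Christoffel symbols $-2x_{k'}\Gamma^\nabla_{ij}{}^k$ hence with $\nabla$, and the $x'$-independent part with $\Phi_{ij}$. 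Here is the key observation that makes this clean: the fibre-cubic coefficient in any such metric is automatically totally symmetric in the appropriate sense and proportional to $x_{i'}x_{j'}$ times a linear form in $x'$, so it genuinely determines a vector field on the base; similarly the other graded pieces transform as tensors on $\Sigma$.

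The converse direction is the routine half: given $X$, $T$, $\nabla$, $\Phi$ on an affine surface, one writes down $g$ in coordinates via the formula in the statement, extracts $a$, $b$, $c$, and checks that they satisfy the self-duality equations from \cite{DR-GR-VL-06}. This is a direct but lengthy computation using the curvature formulae in Section \ref{Sect-TRC}; I would organize it by computing $W^g_-$ (equivalently, the anti-self-dual Weyl components) and verifying each vanishes, exploiting that the fibre-degree grading of $B_{ij}$ is respected by the curvature operations so the verification splits by homogeneity degree in $x'$.

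The main obstacle I anticipate is the \emph{well-definedness / invariance} issue in the hard direction: the local normal form of \cite{DR-GR-VL-06} is given in a particular adapted coordinate system, and one must check that the graded pieces I have identified as $X$, $T$, $\nabla$, $\Phi$ actually transform correctly under the residual coordinate changes preserving the Walker form (\ref{eqn-2.a}) — i.e. changes of the form $x_i \mapsto \tilde x_i(x)$ on the base together with the induced affine-linear action on the fibre, plus possible shifts $x_{i'}\mapsto x_{i'}+ (\text{function of }x)$. Under these, the cubic, quadratic, linear, and constant fibre-parts mix in a triangular fashion; the point to verify is that after the mixing the top piece is still a vector field, the next a $(1,1)$-tensor, the linear piece transforms with the inhomogeneous Christoffel-symbol rule (so it is a connection), and the bottom piece is a genuine $(0,2)$-tensor modulo the ambiguity already built into the choice of $\nabla$. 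Once this bookkeeping is done, gluing the local pieces over $\Sigma$ is immediate, and the theorem follows. A secondary, purely computational nuisance is keeping the $n=4$ (i.e. surface base) curvature expressions manageable; I would suppress these details as the paper does elsewhere, citing that they are available on request.
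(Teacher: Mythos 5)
Your overall route is the paper's own: quote the characterization of self-dual Walker metrics from \cite{DR-GR-VL-06} as a normal form for the functions $a,b,c$ of (\ref{eqn-7.a}), and then match the graded pieces in the fibre variables against the coordinate expression of $\iota X(\iota\operatorname{id}\circ\iota\operatorname{id})+\iota\operatorname{id}\circ\iota T+g_\nabla+\pi^*\Phi$. But your statement of that normal form is incorrect, and it contradicts the matching you then carry out. Self-duality does \emph{not} force $a,b,c$ to be at most quadratic in $(x_{1'},x_{2'})$ with a rank-one quadratic part and a linear part encoding a $(1,1)$-tensor; by \cite{DR-GR-VL-06} (Equation (\ref{eqn-7.b}) of the paper) they are polynomials of degree \emph{three} in the fibre variables, with the cubic coefficients governed by two functions $\mathcal{A},\mathcal{C}$ (the components of the vector field $X$), the quadratic coefficients by four functions $\mathcal{B},\mathcal{D},\mathcal{E},\mathcal{F}$ (the components of $T$, via $\iota T\circ\iota\operatorname{id}$), the linear part by six arbitrary functions (the Christoffel symbols, via $-2x_{k'}\Gamma^\nabla_{ij}{}^k$), and the fibre-independent part by three arbitrary functions ($\Phi$). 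With the normal form as you first state it there is no cubic piece to identify with $X$, and the count of free functions in each degree does not close, so the misquoted input would sink the argument; with the correct form (which your matching sentence implicitly uses) the identification is exactly the paper's and is sound. Since the whole proof rests on this cited characterization, it must be quoted accurately.

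Two smaller points. The result of \cite{DR-GR-VL-06} is itself an equivalence, so the converse needs no separate computation of $W^g_-$: any metric of the displayed form has $a,b,c$ of the shape (\ref{eqn-7.b}), hence is self-dual; your proposed degree-by-degree verification of the anti-self-dual Weyl components is redundant (though harmless). Also, the theorem is a local statement (``locally isometric''), so the invariance-under-Walker-coordinate-changes and gluing discussion, while a sensible concern for a global refinement, is not needed here: one simply takes $\Sigma$ to be the base coordinate patch, with $\nabla$, $X$, $T$, $\Phi$ defined by the extracted coefficients.
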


\begin{proof}
It follows from \cite{DR-GR-VL-06} that the metric of Equation (\ref{eqn-7.a}) is self-dual if and only if the
functions $a$, $b$, $c$ have the form
\begin{equation}\label{eqn-7.b}
\begin{array}{l}
   a(x_1,x_2,x_{1'},x_{2'})\! =\! x_{1'}^3 \mathcal{A}\! +\! x_{1'}^2 \mathcal{B} \!+\! x_{1'}^2 x_{2'} \mathcal{C}
   \!+\! x_{1'} x_{2'} \mathcal{D} \!+\! x_{1'} P \!+\! x_{2'} Q \!+\! \xi,
   \\[0.1in]
   b(x_1,x_2,x_{1'},x_{2'}) \!=\! x_{2'}^3 \mathcal{C} \!+\! x_{2'}^2 \mathcal{E} \!+ \!x_{1'} x_{2'}^2 \mathcal{A}
   \!+\! x_{1'} x_{2'} \mathcal{F} \!+\! x_{1'} S \!+\! x_{2'} T \!+\! \eta,
   \\[0.1in]
   c(x_1,x_2,x_{1'},x_{2'}) \!=\! \frac{1}{2} x_{1'}^2 \mathcal{F} \!+\! \frac{1}{2} x_{2'}^2 \mathcal{D}
   \!+\! x_{1'}^2 x_{2'} \mathcal{A} \!+\! x_{1'} x_{2'}^2 \mathcal{C} \!+\! \frac{1}{2} x_{1'} x_{2'} ( \mathcal{B} \!+\! \mathcal{E} )
   \\[0.1in]
   \phantom{c(x_1,x_2,x_{1'},x_{2'}) \!=\!}
   \!+\! x_{1'} U \!+\! x_{2'} V \!+\! \gamma,
\end{array}
\end{equation}
where all capital, calligraphic and Greek letters stand for arbitrary smooth functions depending
only on the coordinates $(x_1,x_2)$.

For a vector field $X=\mathcal{A}(x_1,x_2)\partial_1+\mathcal{C}(x_1,x_2)\partial_2$ on  $\Sigma$ we have
\[
    \iota X= x_{1'}\mathcal{A}(x_1,x_2)+x_{2'}\mathcal{C}(x_1,x_2),
\]
and hence
\[
\begin{array}{l}
   (\iota X\cdot\iota \operatorname{id}\circ\iota \operatorname{id})_{11} =
   x_{1'}^3 \mathcal{A}(x_1,x_2)+x_{1'}^2x_{2'} \mathcal{C}(x_1,x_2),
   \\[0.05in]
   (\iota X\cdot\iota \operatorname{id}\circ\iota \operatorname{id})_{12} =
   x_{1'}^2x_{2'} \mathcal{A}(x_1,x_2)+x_{1'}x_{2'}^2 \mathcal{C}(x_1,x_2),
   \\[0.05in]
   (\iota X\cdot\iota \operatorname{id}\circ\iota \operatorname{id})_{22} =
   x_{1'}x_{2'}^2 \mathcal{A}(x_1,x_2)+x_{2'}^3 \mathcal{C}(x_1,x_2).
\end{array}
\]
Next, let $T$ be a $(1,1)$-tensor field on $\Sigma$ with components
\[
T^1_1=\mathcal{B}(x_1,x_2), \quad T^2_1=\mathcal{D}(x_1,x_2), \quad T^1_2=\mathcal{F}(x_1,x_2),
\quad T^2_2=\mathcal{E}(x_1,x_2).
\]
It follows from the definition of  $\iota T$ in
Equation (\ref{eqn-4.a}) that:
\[
\begin{array}{l}
   (\iota T)_1 = x_{1'}\mathcal{B}(x_1,x_2)+x_{2'}\mathcal{D}(x_1,x_2),
   \\[0.05in]
   (\iota T)_2 = x_{1'}\mathcal{F}(x_1,x_2)+x_{2'}\mathcal{E}(x_1,x_2)\\
\end{array}
\]
and therefore
\[
\begin{array}{l}
    (\iota T\circ\iota\operatorname{id})_{11} =  x_{1'}^2 \mathcal{B}(x_1,x_2)
    +x_{1'}x_{2'} \mathcal{D}(x_1,x_2),
    \\[0.05in]
    (\iota T\circ\iota\operatorname{id})_{12} =
    \frac{1}{2}\left(x_{1'}^2 \mathcal{F}(x_1,x_2)
    +x_{2'}^2 \mathcal{D}(x_1,x_2)\right.
    \\[0.05in]
    \phantom{(\iota T\circ\iota\operatorname{id})_{12}=\frac{1}{2}(}
    \left. +x_{1'}x_{2'}(\mathcal{B}(x_1,x_2)+\mathcal{E}(x_1,x_2))\right),
    \\[0.05in]
    (\iota T\circ\iota\operatorname{id})_{22} =  x_{1'}x_{2'}\mathcal{F}(x_1,x_2)
    +x_{2'}^2 \mathcal{E}(x_1,x_2).
\end{array}
\]
Now the result follows from Equation (\ref{eqn-7.b}).
\end{proof}

\medskip

Note that, as a direct consequence of  Equation (\ref{eqn-7.b}),  any modified Riemannian
extension $g_{\nabla,\Phi,0}$ is necessarily a self-dual Walker metric.  Recently, the authors showed that Ivanov--Petrova self-dual Walker $4$-metrics correspond to modified
Riemannian extensions $g_{\nabla,\Phi,0}$ \cite{C-GR-VL}, and the same was shown by Derdzinski for Ricci-flat  self-dual  Walker $4$-metrics   \cite{D}.  For sake of
completeness, we recall the following \cite[Thm. 6.1]{D}, which strengthen the main result in \cite{GKVV}.

\begin{theorem}\label{thm-7.1}
A four dimensional Ricci flat self-dual Walker metric is locally
isometric to the cotangent bundle $T^*\Sigma$ of an affine surface
$(\Sigma,\nabla)$ equipped with the modified Riemannian extension
$g_{\nabla,\Phi,0}=g_\nabla +\pi^*\Phi$, where $\nabla$ is a torsion free connection
with skew-symmetric Ricci tensor which expresses in adapted
coordinates $(x_1,x_2)$ by
\[
\Gamma^\nabla_{11}{}^1=-\partial_1\varphi, \qquad \Gamma^\nabla_{22}{}^2=\partial_2\varphi
\]
for an arbitrary function $\varphi$, and where $\Phi$ is an
arbitrary symmetric $(0,2)$-tensor field on $\Sigma$.
\end{theorem}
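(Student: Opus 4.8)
The plan is to use the self-dual normal form of Theorem \ref{thm-7.2} to reduce to a modified Riemannian extension $g_{\nabla,\Phi,0}$, to read off Ricci flatness from the computation behind Theorem \ref{thm-1.2}, and to finish with a normal form for torsion-free surface connections whose Ricci tensor is skew-symmetric. Since a Ricci flat self-dual Walker metric is in particular self-dual, Theorem \ref{thm-7.2} lets us assume that, locally, $g=\iota X(\iota\operatorname{id}\circ\iota\operatorname{id})+\iota\operatorname{id}\circ\iota T+g_\nabla+\pi^*\Phi$ on $T^*\Sigma$ for a surface $\Sigma$ carrying a vector field $X$, a $(1,1)$-tensor field $T$, a torsion-free connection $\nabla$ and a symmetric $(0,2)$-tensor $\Phi$. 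Equivalently, $g$ is the Walker metric (\ref{eqn-2.a}) whose matrix $B_{ij}$ is the polynomial (\ref{eqn-7.b}) in $(x_{1'},x_{2'})$, where the cubic part encodes $X$, the quadratic part encodes $T$, the linear part encodes the Christoffel symbols of $\nabla$, and the $x'$-free part is $\Phi$.

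First I would compute the Ricci tensor from the curvature formulas of Section \ref{Sect-TRC}. One finds $\rho^g(\partial_{i'},\partial_{j'})=0$ (the null distribution $\ker\pi_*$ is parallel), while the mixed components reduce to $\rho^g(\partial_i,\partial_{j'})=\tfrac12\sum_k\partial_{k'}\partial_{j'}B_{ik}$, which is linear in $x'$ through the cubic part of $B$ (i.e. $X$) plus a constant through the quadratic part of $B$ (i.e. $T$); its vanishing already forces $X=0$ and $T=0$. Hence $B$ is affine in $x'$ and, after absorbing its linear part into the Christoffel symbols of $\nabla$ and its $x'$-free part into $\Phi$, the metric is $g=g_{\nabla,\Phi,0}=g_\nabla+\pi^*\Phi$. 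For this metric the computation in Section \ref{sect-4} (proof of Theorem \ref{thm-1.2} with $c=0$) gives $\rho^g_0=2\,\pi^*\rho^{\nabla,s}$, while $\tau^g=0$ since the scalar curvature is a multiple of $c$; therefore $g$ is Ricci flat if and only if $\rho^{\nabla,s}=0$, i.e. $\nabla$ has skew-symmetric Ricci tensor.

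It remains to bring a torsion-free surface connection with $\rho^\nabla=\rho^{\nabla,a}$ into the stated form. For any torsion-free connection one has $\rho^{\nabla,a}=\tfrac12\,d\omega$, where $\omega(\partial_i)=-(\Gamma^\nabla_{i1}{}^1+\Gamma^\nabla_{i2}{}^2)$ is the $1$-form recording the failure of $\nabla$ to preserve the coordinate area form; when $\rho^{\nabla,s}=0$ the Ricci tensor is thus the closed $2$-form $\tfrac12\,d\omega$. Choosing coordinates in which the four off-diagonal Christoffel symbols $\Gamma^\nabla_{11}{}^2$, $\Gamma^\nabla_{12}{}^1$, $\Gamma^\nabla_{12}{}^2$, $\Gamma^\nabla_{22}{}^1$ vanish, the condition $\rho^{\nabla,s}=0$ becomes $\partial_2\Gamma^\nabla_{11}{}^1+\partial_1\Gamma^\nabla_{22}{}^2=0$, which by the Poincar\'e lemma is solved by $\Gamma^\nabla_{11}{}^1=-\partial_1\varphi$, $\Gamma^\nabla_{22}{}^2=\partial_2\varphi$ for a function $\varphi$; a short direct computation confirms $\rho^\nabla=-\partial_1\partial_2\varphi\,(dx^1\otimes dx^2-dx^2\otimes dx^1)$, so $\varphi$ is arbitrary and $\Phi$ is unconstrained throughout. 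Conversely, any $g_\nabla+\pi^*\Phi$ built from such data is self-dual (being a modified Riemannian extension $g_{\nabla,\Phi,0}$, by the remark following Theorem \ref{thm-7.2}) and Ricci flat (by the formula for $\rho^g_0$ above), which closes the equivalence. The step I expect to be the main obstacle is the one just sketched --- showing that the four off-diagonal Christoffel symbols can indeed be gauged away simultaneously (equivalently, controlling the affine freedom hidden in the Walker coordinates of Theorem \ref{thm-7.2}) --- which is the genuine affine-geometric content of the theorem and the place where Derdzinski's argument in \cite{D} does the real work.
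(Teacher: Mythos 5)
A preliminary remark on the comparison you were asked for: the paper contains no proof of this statement at all --- Theorem \ref{thm-7.1} is recalled, for completeness, from Derdzinski \cite[Thm.~6.1]{D} --- so there is no internal argument to measure you against, only the question of whether your proposal is itself complete. Up to its last step it is correct and is the natural route: by Theorem \ref{thm-7.2} a self-dual Walker metric has $B$ of the form (\ref{eqn-7.b}); the curvature preserves the parallel null distribution, so $\rho^g(\partial_{i'},\partial_{j'})=0$ and $\rho^g(\partial_j,\partial_{k'})=\frac12\sum_i\partial_{i'}\partial_{k'}B_{ji}$ as you claim, and setting these four mixed components to zero does annihilate all six coefficient functions $\mathcal{A},\mathcal{B},\mathcal{C},\mathcal{D},\mathcal{E},\mathcal{F}$ (you assert rather than display this small linear-algebra check, e.g. $2\rho^g_{11'}=8x_{1'}\mathcal{A}+4x_{2'}\mathcal{C}+\frac52\mathcal{B}+\frac12\mathcal{E}$ together with its $1\leftrightarrow 2$ analogue forces $\mathcal{A}=\mathcal{C}=0$ and $\mathcal{B}=\mathcal{E}=0$, and the remaining two components give $\mathcal{D}=\mathcal{F}=0$; it does work). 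Hence $X=0$, $T=0$, the metric is $g_{\nabla,\Phi,0}=g_\nabla+\pi^*\Phi$, and the trace-free Ricci formula of Section \ref{sect-4} with $c=0$, together with $\tau^g$ being a multiple of $c$, gives Ricci flat if and only if $\rho^{\nabla,s}=0$, with $\Phi$ unconstrained; the converse direction is also fine.

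The genuine gap is exactly where you locate it, and it is not a minor one: the phrase \emph{choosing coordinates in which the four off-diagonal Christoffel symbols vanish} is the entire content of the explicit normal form in the statement. A coordinate change on $\Sigma$ supplies two functions of two variables, while four Christoffel components must be killed, so no counting argument, and no Poincar\'e-lemma argument of the kind you use afterwards, produces such coordinates; their existence relies essentially on the hypothesis $\rho^{\nabla,s}=0$ and is precisely Derdzinski's classification of torsion-free connections with skew-symmetric Ricci tensor on surfaces in \cite{D}. Your subsequent steps (that in such coordinates skew-symmetry of the Ricci tensor reads $\partial_2\Gamma^\nabla_{11}{}^1+\partial_1\Gamma^\nabla_{22}{}^2=0$, solved by $\Gamma^\nabla_{11}{}^1=-\partial_1\varphi$, $\Gamma^\nabla_{22}{}^2=\partial_2\varphi$, with $\rho^\nabla=-\partial_1\partial_2\varphi\,(dx^1\otimes dx^2-dx^2\otimes dx^1)$) are correct, but they presuppose the adapted coordinates rather than construct them. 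As written, then, your argument establishes the coordinate-free part of the theorem --- $g$ is locally $g_\nabla+\pi^*\Phi$ with $\rho^\nabla$ skew-symmetric --- and, like the paper itself, must cite \cite{D} for the normal form; a self-contained proof would have to reproduce Derdzinski's construction of those coordinates, which is the real affine-geometric work here.
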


\medskip

\subsection{Osserman $4$-metrics with non-diagonalizable Jacobi operators}

Let $\mathcal{N}$ be
a pseudo-Riemannian manifold of signature $(2,2)$. Then, for each non-null vector
$X$, the induced metric on $X^\perp$ is of Lorentzian signature and thus the Jacobi operator
${\mathcal{J}^g}(X)=R^g(\,\cdot\,,X)X$, viewed as an endomorphism of $X^\perp$, corresponds to one of the following
possibilities
\cite{BBR01}:

\[
\begin{array}{cccc}
\left(\begin{array}{ccc}
\alpha & &\\
       &\beta&\\
       & &\gamma
\end{array}\right),\!\quad
& \left(\begin{array}{ccc}
\alpha &-\beta&\\
\beta&\alpha&\\
       & &\gamma
\end{array}\right),\!\quad
& \left(\begin{array}{ccc}
\alpha & &\\
       &\beta&\\
       & 1&\beta
\end{array}\right),\!\quad
& \left(\begin{array}{ccc}
\alpha & &\\
    1   &\alpha&\\
       & 1&\alpha
\end{array}\right).
\\
\noalign{\medskip} \!\!\!\!\emph{Type Ia}\,\, & \!\!\!\!\emph{Type Ib}\,\,\, &
\!\!\emph{Type II}\,\, & \emph{Type III}
\end{array}
\]
Type Ia Osserman metrics correspond to real, complex and
paracomplex space forms, Type Ib Osserman metrics do not exist
\cite{BBR01}, and Types II and III Osserman metrics with non-nilpotent
 Jacobi operators have recently been  classified in
\cite{DR-GR-VL-06} and \cite{De}, respectively. Further, note
that any Type II Osserman metric whose Jacobi operators have
non-zero eigenvalues is necessarily a Walker metric.
Moreover, since four dimensional Osserman metrics are locally Einstein and self-dual, they can be
described by means of   modified Riemannian extensions as follows.

\begin{theorem}\label{thm-7.3}
A four dimensional Type II Osserman metric whose Jacobi operator  has non-zero eigenvalues is locally
isometric to the cotangent bundle $T^*\Sigma$ of an affine
surface $(\Sigma,\nabla)$, with metric tensor
$$
g=\frac{\tau}{6}\cdot \iota \operatorname{id}\circ\iota \operatorname{id} + g_{\nabla}+\frac{24}{\tau}\pi^*\Phi,
$$
where $\tau\neq 0$ denotes the scalar curvature of $(T^*\Sigma,g)$,
$\nabla$ is an arbitrary non-flat connection on $\Sigma$ and $\Phi$ is the symmetric
part of the Ricci tensor  of $\nabla$.
\end{theorem}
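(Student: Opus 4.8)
The plan is to reduce Theorem~\ref{thm-7.3} to a curvature computation performed on the self-dual Walker normal form already available in the paper. First, since the minimal polynomial of a Type~II Jacobi operator has a (non-zero) double root, the discussion in Section~\ref{sect-3} shows that the metric is necessarily Walker with a parallel field of planes of maximal dimensionality; and being four-dimensional Osserman, it is moreover locally Einstein and self-dual, after fixing an orientation. Thus Theorem~\ref{thm-7.2} applies and lets me write $g$ locally on a cotangent bundle $T^*\Sigma$ in the form $g=\iota X(\iota\operatorname{id}\circ\iota\operatorname{id})+\iota\operatorname{id}\circ\iota T+g_\nabla+\pi^*\Phi$, equivalently with coefficients $a,b,c$ of the shape~(\ref{eqn-7.b}), for some vector field $X$, $(1,1)$-tensor field $T$, torsion free connection $\nabla$ and symmetric $(0,2)$-tensor field $\Phi$ on $\Sigma$.

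Next I would impose the Einstein equation on this self-dual family. Substituting~(\ref{eqn-7.b}) into the curvature formulas of Section~\ref{Sect-TRC}, the trace-free Ricci tensor $\rho_0^g$ becomes a polynomial in $x_{1'},x_{2'}$ whose coefficients are differential expressions in the functions $\mathcal{A},\mathcal{B},\dots,\mathcal{F}$ and in the lower-order data. Reading $\rho_0^g=0$ degree by degree, I expect the top-degree part to force $\mathcal{A}=\mathcal{C}=0$, hence $X=0$; the next batch of equations to force $\mathcal{D}=\mathcal{F}=0$ and $\mathcal{B}=\mathcal{E}$ to be a constant $c$, hence $T=c\operatorname{id}$; so that $g$ collapses to the metric $g_{\nabla,\Phi,c}$ of~(\ref{eqn-4.c}). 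At that point Theorem~\ref{thm-1.2} with $n=2$ yields $\Phi=\tfrac{4}{c}\rho^{\nabla,s}$, and since the scalar curvature of $g_{\nabla,\Phi,c}$ equals $6c$ (the dimensional constant in dimension four), setting $\tau=6c$ gives exactly $g=\tfrac{\tau}{6}\,\iota\operatorname{id}\circ\iota\operatorname{id}+g_\nabla+\tfrac{24}{\tau}\pi^*\rho^{\nabla,s}$ with $\Phi=\rho^{\nabla,s}$. This Einstein normalization---extracting $X=0$ and $T=c\operatorname{id}$ out of the full self-dual family---is where essentially all the work sits and is the main obstacle; the degree-by-degree bookkeeping is long but mechanical.

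It then remains to characterize when such a $g_{\nabla,\Phi,c}$ is genuinely of Type~II with non-zero eigenvalues. A direct computation of its Jacobi operator, entirely analogous to the six-dimensional computation of Section~\ref{sect-6}, should show that the restriction of ${\mathcal{J}^g}(v)$ to $v^\perp$ has characteristic polynomial $(\lambda-c)(\lambda-\tfrac{c}{4})^2$, so $g$ is Osserman with eigenvalues $\{\tfrac{\tau}{6},\tfrac{\tau}{24},\tfrac{\tau}{24}\}$, both non-zero exactly when $\tau\neq0$. Finally, to see that non-diagonalizability of ${\mathcal{J}^g}(v)$ is equivalent to $\nabla$ being non-flat: if $\nabla$ is flat then $\rho^{\nabla,s}=0$ and $g=g_{\nabla,c}$, which by Theorem~\ref{thm-1.1} is a paracomplex space form, hence of Type~Ia, contradicting the hypothesis; conversely, when $R^\nabla\neq0$ one checks that $({\mathcal{J}^g}(v)-\tfrac{c}{4}\operatorname{id})^2\neq0$ for a generic non-null $v$ (this is read off the lower off-diagonal block of the Jacobi operator in Walker coordinates), so ${\mathcal{J}^g}(v)$ carries a genuine $2\times2$ Jordan block. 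This forces $\nabla$ to be non-flat, and running the computation backwards shows that any non-flat affine surface connection $\nabla$ does produce, via the displayed formula, a Type~II Osserman metric with non-zero eigenvalues, which completes the proof.
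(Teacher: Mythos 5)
Your route is correct, but it is organized differently from the paper's. The paper disposes of Theorem \ref{thm-7.3} by quoting the prior classification of four dimensional Type II Osserman Walker metrics (Thm.~3.1 of \cite{DR-GR-VL-06}), which already presents $a,b,c$ of Equation (\ref{eqn-7.a}) as quadratic polynomials in $(x_{1'},x_{2'})$ with the Osserman/Einstein constraints built in; its proof then only reads the Christoffel symbols of $\nabla$ off the terms linear in the fiber variables (e.g.\ $\Gamma^\nabla_{11}{}^1=-\frac12 P$) and recognizes the quadratic and $x'$-independent parts as $\frac{\tau}{6}\,\iota\operatorname{id}\circ\iota\operatorname{id}$ and $\frac{24}{\tau}\pi^*\rho^{\nabla,s}$, exactly in the spirit of Theorem \ref{thm-7.2}. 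You instead stay inside the paper: Osserman $\Rightarrow$ Einstein and self-dual, Type II with non-zero eigenvalue $\Rightarrow$ Walker, then Theorem \ref{thm-7.2} gives the form (\ref{eqn-7.b}), and you pin down the cubic and quadratic coefficients by the Einstein equations before invoking Theorem \ref{thm-1.2}. Your ``expected'' step does check out and is cheaper than you suggest: from the curvature formulas of Section \ref{sect-3} one gets $\rho_{i'j'}=0$ identically and $\rho_{jk'}=\frac12\bigl(\partial_{1'}\partial_{k'}g_{j1}+\partial_{2'}\partial_{k'}g_{j2}\bigr)$, and equating this with $\frac{\tau}{4}\delta_{jk}$ for the coefficients (\ref{eqn-7.b}) forces exactly $\mathcal{A}=\mathcal{C}=\mathcal{D}=\mathcal{F}=0$ and $5\mathcal{B}+\mathcal{E}=\mathcal{B}+5\mathcal{E}=\tau$, hence $\mathcal{B}=\mathcal{E}=\frac{\tau}{6}$; the remaining Einstein equations are then precisely Theorem \ref{thm-1.2} with $n=2$, giving $\Phi=\frac{24}{\tau}\rho^{\nabla,s}$, and the scalar curvature of $g_{\nabla,\Phi,c}$ is indeed $6c$. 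What each approach buys: the paper's is shorter because the cited classification already contains the whole Osserman analysis, including the converse that these metrics are genuinely Type II; yours is self-contained within the paper and makes transparent where the relation between $\Phi$ and $\rho^{\nabla,s}$ comes from. The only soft spots, comparable to the paper's own level of detail, are in your last paragraph: the exclusion of flat $\nabla$ via Theorem \ref{thm-1.1} is fine, but the claims that the eigenvalues are $\{\tau/6,\tau/24,\tau/24\}$ and that every non-flat $\nabla$ yields a non-diagonalizable Jacobi operator are deferred to an unperformed Jacobi-operator computation (the analogue of Section \ref{sect-6}), which is needed to justify the word ``arbitrary'' in the statement.
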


\begin{proof}
It follows after some straightforward calculations as in Theorem \ref{thm-7.2} that
any Type II Osserman metric of non-zero scalar curvature $\tau$ is obtained by the above modified
Riemannian extension  of a torsion free connection $\nabla$ given by
\[
\begin{array}{lll}
\Gamma^\nabla_{11}{}^1=-\frac{1}{2}P(x_1,x_2), &\quad
\Gamma^\nabla_{11}{}^2=-\frac{1}{2}Q(x_1,x_2), &\quad
\Gamma^\nabla_{12}{}^1=-\frac{1}{2}U(x_1,x_2), \\
\noalign{\medskip}
\Gamma^\nabla_{12}{}^2=-\frac{1}{2}V(x_1,x_2), &\quad
\Gamma^\nabla_{22}{}^1=-\frac{1}{2}S(x_1,x_2), &\quad
\Gamma^\nabla_{22}{}^2=-\frac{1}{2}T(x_1,x_2),
\end{array}
\]
just considering \cite[Thm. 3.1]{DR-GR-VL-06}.
\end{proof}

\begin{remark}\rm
The first examples of non Ricci flat Type II Osserman metrics were given in \cite{DGV1} as follows.
Let $N:=\mathbb{R}^4$ with usual coordinates $(x_1,x_2,x_3,x_4)$ and define a metric by
\begin{equation}\label{eqn-7.c}
\begin{array}{l}
g= 2(dx^1\circ dx^3+ dx^2\circ dx^4)
+(4kx_1^2-\frac{1}{4k}f(x_4)^2)dx^3\circ dx^3\\
\quad+ 4kx_2^2 dx^4\circ dx^4
+2(4kx_1x_2+x_2f(x_4)-\frac{1}{4k}f'(x_4)) dx^3\circ dx^4,\vphantom{\vrule height 11pt}
\end{array}
\end{equation}
where $k$ is a non-zero constant and $f(x_4)$ is an arbitrary function.

Now, an easy calculation shows that  Equation (\ref{eqn-7.c}) is nothing but the modified  Riemannian extension
$g=4k\cdot \iota \operatorname{id}\circ\iota \operatorname{id} + g_{\nabla}+\frac{1}{k}\pi^*\Phi$
of the torsion free connection $\nabla$ given by
$\Gamma^\nabla_{12}{}^2=-\frac{1}{2}f(x_2)$, whose Ricci tensor is given by
$$\rho^\nabla=-\textstyle\frac{1}{4}f(x_2)^2dx^1\otimes dx^1 -\frac12f'(x_2)dx^1\otimes dx^2\,.
$$
This is neither symmetric nor skew-symmetric. We symmetrize to see
$$
\Phi=\textstyle-\frac{1}{4}f(x_2)^2dx^1\circ dx^1 -\frac{1}{2}f'(x_2)dx^1\circ dx^2\,.
$$
\end{remark}

\begin{remark}\rm
Four dimensional Type II Osserman metrics have been studied intensively during the last years. The existence of
many nilpotent examples suggested that the family of Osserman metrics with two-step nilpotent Jacobi operators
was larger than the non-nilpotent one. However, this seems not to be true
(see Theorems \ref{thm-7.1} and \ref{thm-7.3}).
\end{remark}

\section{Six Dimensional geometry}\label{sect-6}
Let $p=3$. We consider on  $\mathbb{R}^3$ the torsion free
connection $\nabla$ with the only non-zero Christoffel symbol given
by $\nabla_{\partial_1}\partial_1 = x_2 \partial_3$. This connection
is Ricci flat, but not flat. We set $\Phi=0$ and take $c=1$ in
Equation (\ref{eqn-4.c}) to define a metric $g=g_{\nabla,1}$ where
the tensor $B$ of Equation (\ref{eqn-2.a}) is given by:
\[
    B=
    \left(
    \begin{array}{ccc}
             x_{1'}^2 - 2 x_2 x_{3'} &  x_{1'} x_{2'} &
             x_{1'} x_{3'}
            \\[0.035in]
             x_{1'} x_{2'} &  x_{2'}^2   &
             x_{2'} x_{3'}
            \\[0.035in]
             x_{1'} x_{3'} &  x_{2'} x_{3'} &
             x_{3'}^2
    \end{array}
    \right).
\]

\subsection{Proof of Theorem \ref{thm-1.3}}

The curvature tensor of the Levi-Civita connection is given
by:
$$
\begin{array}{l}
    R^g_{1212}=\frac{1}{2} x_{2'} x_{3'} (x_2 x_{2'}-4),\qquad
    R^g_{1213}=\frac{1}{2} x_{3'}^2 (x_2 x_{2'} - 2),
    \\[0.075in]
    R^g_{1211'} = - R^g_{1222'} = -R^g_{1323'} = -R^g_{13'23} =\frac{1}{4} x_{1'} x_{2'},
    \\[0.075in]
    R^g_{1212'}=R^g_{1313'}=-\frac{1}{4}(x_{1'}^2-2x_2 x_{3'}),\qquad
    R^g_{1221'} = - R^g_{2323'} = \frac{1}{4} x_{2'}^2,
    \\[0.075in]
    R^g_{1213'}=-R^g_{11'11'}= -R^g_{22'22'}=-R^g_{33'33'} = -1,
    \\[0.075in]
    R^g_{1231'} = R^g_{1321'}= R^g_{2322'} = -R^g_{2333'} = \frac{1}{4} x_{2'} x_{3'},
    \\[0.075in]
    R^g_{1232'}= -R^g_{1311'} = R^g_{1333'} = -R^g_{12'23} =-\frac{1}{4} x_{1'} x_{3'},
    \\[0.075in]
    R^g_{1313} = \frac{1}{2} x_2 x_{3'}^3, \qquad
    R^g_{1331'} = R^g_{2332'} = \frac{1}{4} x_{3'}^2,
    \\[0.075in]
    R^g_{11'22'} = R^g_{11'33'} = R^g_{12'21'}=R^g_{13'31'}=R^g_{22'33'}=R^g_{23'32'} = \frac{1}{2}.
\end{array}
$$

The eigenvalues of the Jacobi operator of an Osserman metric change
sign when passing from timelike to spacelike directions. Thus, for
the purpose of studying the Osserman property, it is convenient to
consider the operator given by setting
$\widetilde{\mathcal{J}^g}(v)=g(v,v)^{-1}{\mathcal{J}^g}(v)$
associated to each non-null vector $v$, whose eigenvalues must be
constant if and only if the manifold is Osserman.

We now determine the Jacobi operator.
Let $v=\sum_{i=1}^3(\alpha_i\partial_i+\alpha_{i'}\partial_{i'})$ be a
non-null vector, where
$\{\partial_i,\partial_{i'}\}$ denotes the coordinate basis. The associated Jacobi operator
${\mathcal{J}^g}(v)=R^g(\,\cdot\,,v)v$
can be expressed, with respect to the coordinate basis, as
$$
{\mathcal{J}^g}(v)= \frac{1}{4} \left(
\begin{array}{cccccc}
a_{11} & a_{12} & a_{13} & -4\alpha_1^2 & -4\alpha_1\alpha_2 & -4\alpha_1 \alpha_3
\\
a_{21} & a_{22} & a_{23} & -4\alpha_1\alpha_2 & -4\alpha_2^2 & -4\alpha_2 \alpha_3
\\
a_{31} & a_{32} & a_{33} & -4\alpha_1 \alpha_3 & -4\alpha_2\alpha_3 & -4\alpha_3^2
\\
a_{1'1} & a_{1'2} & a_{1'3} & a_{1'1'} & a_{1'2'} & a_{1'3'}
\\
a_{2'1} & a_{2'2} & a_{2'3} & a_{2'1'} & a_{2'2'} & a_{2'3'}
\\
a_{3'1} & a_{3'2} & a_{3'3} & a_{3'1'} & a_{3'2'} & a_{3'3'}
\end{array}\right)
$$
with

\begin{flushleft}
$
\begin{array}{l}
    a_{11}= x_{2'}^2 \alpha_2^2 + 2 x_{2'} x_{3'} \alpha_2 \alpha_3 + x_{3'}^2 \alpha_3^2 +
    x_{1'} (x_{2'} \alpha_1 \alpha_2 + x_{3'} \alpha_1 \alpha_3)
    \\[0.035in]
    \phantom{a_{11}=}
    +  2 (2 \alpha_1 \alpha_{1'} + \alpha_2 \alpha_{2'} + \alpha_3 \alpha_{3'}),
\end{array}
$
\end{flushleft}
\begin{flushleft}
$
\begin{array}{l}
    a_{1j}= -x_{1'} x_{j'} \alpha_1^2 - x_{2'} x_{j'} \alpha_1 \alpha_2 - x_{j'} x_{3'} \alpha_1 \alpha_3 + 2 \alpha_1 \alpha_{j'},\,\, j=2,3,
\end{array}
$
\end{flushleft}
\begin{flushleft}
$
\begin{array}{l}
    a_{21}= -x_{1'}^2 \alpha_1 \alpha_2 + 2 x_2 x_{3'} \alpha_1 \alpha_2
    -  x_{1'} (x_{2'} \alpha_2^2 + x_{3'} \alpha_2 \alpha_3) + 2 \alpha_2 \alpha_{1'},
\end{array}
$
\end{flushleft}
\begin{flushleft}
$
\begin{array}{l}
    a_{22}= \varepsilon_v - x_{1'} x_{2'} \alpha_1 \alpha_2 - x_{2'}^2 \alpha_2^2 - x_{2'} x_{3'} \alpha_2 \alpha_3 +
 2 \alpha_2 \alpha_{2'},
\end{array}
$
\end{flushleft}
\begin{flushleft}
$
\begin{array}{l}
    a_{23}= -x_{1'} x_{3'} \alpha_1 \alpha_2 - x_{2'} x_{3'} \alpha_2^2 - x_{3'}^2 \alpha_2 \alpha_3 + 2 \alpha_2 \alpha_{3'},
\end{array}
$
\end{flushleft}
\begin{flushleft}
$
\begin{array}{l}
    a_{31}= -4 \alpha_1 \alpha_2 - x_{1'}^2 \alpha_1 \alpha_3 + 2 x_2 x_{3'} \alpha_1 \alpha_3
    -  x_{1'} (x_{2'} \alpha_2 \alpha_3 + x_{3'} \alpha_3^2) + 2 \alpha_3 \alpha_{1'},
\end{array}
$
\end{flushleft}
\begin{flushleft}
$
\begin{array}{l}
    a_{32}= 4 \alpha_1^2 - x_{1'} x_{2'} \alpha_1 \alpha_3 - x_{2'}^2 \alpha_2 \alpha_3 - x_{2'} x_{3'} \alpha_3^2 +
 2 \alpha_3 \alpha_{2'},
\end{array}
$
\end{flushleft}
\begin{flushleft}
$
\begin{array}{l}
    a_{33}= \varepsilon_v  - x_{1'} x_{3'} \alpha_1 \alpha_3 - x_{2'} x_{3'} \alpha_2 \alpha_3 - x_{3'}^2 \alpha_3^2 +
 2 \alpha_3 \alpha_{3'},
\end{array}
$
\end{flushleft}
\begin{flushleft}
$
\begin{array}{l}
    a_{1'1}= 8 x_{2'} x_{3'} \alpha_2^2 + 8 x_{3'}^2 \alpha_2 \alpha_3 - 4 x_{1'}^2 \alpha_1 \alpha_{1'} +
 8 x_2 x_{3'} \alpha_1 \alpha_{1'} - 4 \alpha_{1'}^2
 \\[0.035in]
    \phantom{a_{1'1}=}
    -  x_{1'} (4 x_{2'} \alpha_2 \alpha_{1'} - x_{3'} (4 \alpha_1 \alpha_2 - 4 \alpha_3 \alpha_{1'})) +
 8 \alpha_2 \alpha_{3'},
\end{array}
$
\end{flushleft}
\begin{flushleft}
$
\begin{array}{l}
    a_{1'2}= -4 x_{3'}^2 \alpha_1 \alpha_3 - x_{2'}^2 \alpha_2 \alpha_{1'} -
 x_{2'} x_{3'} (8 \alpha_1 \alpha_2 + \alpha_3 \alpha_{1'}) - 3 x_{1'}^2 \alpha_1 \alpha_{2'}
 \\[0.035in]
    \phantom{a_{1'2}=}
    +  6 x_2 x_{3'} \alpha_1 \alpha_{2'} -
 x_{1'} (x_{2'} (\alpha_1 \alpha_{1'} + 3 \alpha_2 \alpha_{2'}) +
    x_{3'} (4 \alpha_1^2 + 3 \alpha_3 \alpha_{2'}))
    \\[0.035in]
    \phantom{a_{1'2}=}
    - 4 (\alpha_{1'} \alpha_{2'} + \alpha_1 \alpha_{3'}),
\end{array}
$
\end{flushleft}
\begin{flushleft}
$
\begin{array}{l}
    a_{1'3}= -x_{2'} x_{3'} \alpha_2 \alpha_{1'} - x_{3'}^2 (4 \alpha_1 \alpha_2 + \alpha_3 \alpha_{1'}) -
 3 x_{1'}^2 \alpha_1 \alpha_{3'} + 6 x_2 x_{3'} \alpha_1 \alpha_{3'}
 \\[0.035in]
    \phantom{a_{1'3}=}
    - 4 \alpha_{1'} \alpha_{3'} -
 x_{1'} (3 x_{2'} \alpha_2 \alpha_{3'} + x_{3'} (\alpha_1 \alpha_{1'} + 3 \alpha_3 \alpha_{3'})),
\end{array}
$
\end{flushleft}
\begin{flushleft}
$
\begin{array}{l}
    a_{1'1'}= 4 \varepsilon_v - 3 x_{2'}^2 \alpha_2^2 - 6 x_{2'} x_{3'} \alpha_2 \alpha_3 -
 3 x_{3'}^2 \alpha_3^2 - x_{1'} (3 x_{2'} \alpha_1 \alpha_2 + 3 x_{3'} \alpha_1 \alpha_3)
 \\[0.035in]
    \phantom{a_{1'1'}=}
    -  4 \alpha_1 \alpha_{1'} - 6 \alpha_2 \alpha_{2'} - 6 \alpha_3 \alpha_{3'},
\end{array}
$
\end{flushleft}
\begin{flushleft}
$
\begin{array}{l}
    a_{1'2'}= 3 x_{1'}^2 \alpha_1 \alpha_2 - 6 x_2 x_{3'} \alpha_1 \alpha_2 +
 x_{1'} (3 x_{2'} \alpha_2^2 + 3 x_{3'} \alpha_2 \alpha_3) + 2 \alpha_2 \alpha_{1'},
\end{array}
$
\end{flushleft}
\begin{flushleft}
$
\begin{array}{l}
    a_{1'3'}= -4 \alpha_1 \alpha_2 + 3 x_{1'}^2 \alpha_1 \alpha_3 - 6 x_2 x_{3'} \alpha_1 \alpha_3 +
 x_{1'} (3 x_{2'} \alpha_2 \alpha_3 + 3 x_{3'} \alpha_3^2) + 2 \alpha_3 \alpha_{1'},
\end{array}
$
\end{flushleft}
\begin{flushleft}
$
\begin{array}{l}
    a_{2'1}= -4 x_{3'}^2 \alpha_1 \alpha_3 - 3 x_{2'}^2 \alpha_2 \alpha_{1'} -
 x_{2'} x_{3'} (4 \alpha_1 \alpha_2 + 3 \alpha_3 \alpha_{1'})
 +  2 x_2 x_{3'} \alpha_1 \alpha_{2'}
 \\[0.035in]
    \phantom{a_{2'1}=}
     - x_{1'}^2 \alpha_1 \alpha_{2'}
     -  x_{1'} (x_{3'} \alpha_3 \alpha_{2'} + x_{2'} (3 \alpha_1 \alpha_{1'} + \alpha_2 \alpha_{2'}))
      -  4 (\alpha_{1'} \alpha_{2'} + \alpha_1 \alpha_{3'}),
\end{array}
$
\end{flushleft}
\begin{flushleft}
$
\begin{array}{l}
    a_{2'2}= -4 x_{1'} x_{2'} \alpha_1 \alpha_{2'} - 4 x_{2'}^2 \alpha_2 \alpha_{2'} - 4 \alpha_{2'}^2 +
 4 x_{2'} x_{3'} (\alpha_1^2 - \alpha_3 \alpha_{2'}),
\end{array}
$
\end{flushleft}
\begin{flushleft}
$
\begin{array}{l}
    a_{2'3}= x_{3'}^2 (4 \alpha_1^2 - \alpha_3 \alpha_{2'}) - 3 x_{2'}^2 \alpha_2 \alpha_{3'} - 4 \alpha_{2'} \alpha_{3'} -
 x_{1'} (x_{3'} \alpha_1 \alpha_{2'} + 3 x_{2'} \alpha_1 \alpha_{3'})
 \\[0.035in]
    \phantom{a_{2'3}=}
    -  x_{2'} x_{3'} (\alpha_2 \alpha_{2'} + 3 \alpha_3 \alpha_{3'}),
\end{array}
$
\end{flushleft}
\begin{flushleft}
$
\begin{array}{l}
    a_{2'1'}= 3 x_{1'} x_{2'} \alpha_1^2 + 3 x_{2'}^2 \alpha_1 \alpha_2 + 3 x_{2'} x_{3'} \alpha_1 \alpha_3 +
 2 \alpha_1 \alpha_{2'},
\end{array}
$
\end{flushleft}
\begin{flushleft}
$
\begin{array}{l}
    a_{2'2'}= \varepsilon_v + 3 x_{1'} x_{2'} \alpha_1 \alpha_2 + 3 x_{2'}^2 \alpha_2^2 +
 3 x_{2'} x_{3'} \alpha_2 \alpha_3 + 2 \alpha_2 \alpha_{2'},
\end{array}
$
\end{flushleft}
\begin{flushleft}
$
\begin{array}{l}
    a_{2'3'}= 4 \alpha_1^2 + 3 x_{1'} x_{2'} \alpha_1 \alpha_3 + 3 x_{2'}^2 \alpha_2 \alpha_3 +
 3 x_{2'} x_{3'} \alpha_3^2 + 2 \alpha_3 \alpha_{2'},
\end{array}
$
\end{flushleft}
\begin{flushleft}
$
\begin{array}{l}
    a_{3'1}= -3 x_{2'} x_{3'} \alpha_2 \alpha_{1'} - 3 x_{3'}^2 \alpha_3 \alpha_{1'} - x_{1'}^2 \alpha_1 \alpha_{3'} +
 2 x_2 x_{3'} \alpha_1 \alpha_{3'} - 4 \alpha_{1'} \alpha_{3'}
 \\[0.035in]
    \phantom{a_{3'1}=}
    -  x_{1'} (x_{2'} \alpha_2 \alpha_{3'} + x_{3'} (3 \alpha_1 \alpha_{1'} + \alpha_3 \alpha_{3'})),
\end{array}
$
\end{flushleft}
\begin{flushleft}
$
\begin{array}{l}
    a_{3'2}= -3 x_{3'}^2 \alpha_3 \alpha_{2'} - x_{2'}^2 \alpha_2 \alpha_{3'} - 4 \alpha_{2'} \alpha_{3'} -
 x_{1'} (3 x_{3'} \alpha_1 \alpha_{2'} + x_{2'} \alpha_1 \alpha_{3'})
 \\[0.035in]
    \phantom{a_{3'2}=}
    -  x_{2'} x_{3'} (3 \alpha_2 \alpha_{2'} + \alpha_3 \alpha_{3'}),
\end{array}
$
\end{flushleft}
\begin{flushleft}
$
\begin{array}{l}
    a_{3'3}= -4 x_{1'} x_{3'} \alpha_1 \alpha_{3'} - 4 x_{2'} x_{3'} \alpha_2 \alpha_{3'} - 4 x_{3'}^2 \alpha_3 \alpha_{3'} -
 4 \alpha_{3'}^2,
\end{array}
$
\end{flushleft}
\begin{flushleft}
$
\begin{array}{l}
    a_{3'j'}= 3 x_{1'} x_{3'} \alpha_1 \alpha_j + 3 x_{2'} x_{3'} \alpha_2 \alpha_j
    + 3 x_{3'}^2  \alpha_3 \alpha_j + 2  \alpha_{3'} \alpha_j,\,\,j=1,2,
\end{array}
$
\end{flushleft}
\begin{flushleft}
$
\begin{array}{l}
    a_{3'3'}= \varepsilon_v + 3 x_{1'} x_{3'} \alpha_1 \alpha_3 + 3 x_{2'} x_{3'} \alpha_2 \alpha_3 +
 3 x_{3'}^2 \alpha_3^2 + 2 \alpha_3 \alpha_{3'}.
\end{array}
$
\end{flushleft}

The characteristic polynomial of the Jacobi operator is now seen to
be:
$$\textstyle p_\lambda(\widetilde{\mathcal{J}^g}(v))=\lambda(\lambda-1)(\lambda-
\frac{1}{4})^4,$$ and therefore $g$ is Osserman with eigenvalues
$\left\{ 0, 1,
\frac{1}{4},\frac{1}{4},\frac{1}{4},\frac{1}{4}\right\}$. Now, for
any unit vector $v$ set
\[\textstyle
    \mathcal{A}(v) = {\mathcal{J}^g}(v)\cdot
    ({\mathcal{J}^g}(v)- \varepsilon_v \operatorname{id})\cdot
    \left({\mathcal{J}^g}(v)-\frac{\varepsilon_v}{4}\operatorname{id}\right).
\]
For the particular choice of the unit vectors
\[\textstyle
    v=\partial_3 + \frac{1}{2}(\varepsilon -x_{3'}^2)\partial_{3'},\qquad
    \bar{v}=\partial_1-\frac{1}{2} (x_{1'}^2 - 2 x_2 x_{3'} -\bar{\varepsilon})\partial_{1'},
\]
we have $g(v,v)=\varepsilon$ and
$g(\bar{v},\bar{v})=\bar\varepsilon$, and a straightforward
calculation shows that $\mathcal{A}(v)=0$, while $(\mathcal{A}(\bar
v))_{32} = -\frac{3}{16}$. Therefore, at any point,
${\mathcal{J}^g}(v)$ diagonalizes while ${\mathcal{J}^g}(\bar{v})$
is not diagonalizable, and hence the metric is neither spacelike
Jordan Osserman nor timelike Jordan Osserman at any point.
\hfill\qedbox

\begin{remark}\rm
We make the following observations concerning the metric of Theorem
\ref{thm-1.3}.
\begin{enumerate}
\item The Jacobi operator ${\mathcal{J}^g}(v)$ associated to a unit vector $v$ is either diagonalizable, has a single $2\times 2$
Jordan block, or has a $3\times 3$ Jordan block, or has two $2\times
2$ Jordan blocks depending on the point and the vector $v$
considered; all possibilities can arise.
\gronkJORDONBLOCK
\gronkJNF
\gronkJORDONREMARK

\item Observe that the eigenvalues of the Jacobi operator of a
pseudo-Riemannian Osserman metric change sign from spacelike to
timelike vectors, and thus they are all zero for null vectors (cf.
\cite{GKV, G1-01}), which shows that any Osserman metric is null
Osserman. Hence, the metric of Theorem \ref{thm-1.3} is null Osserman; moreover, proceeding
as above, one checks that the null Jacobi operators can be two-step
nilpotent, or three-step nilpotent, or four-step nilpotent, changing
even at a fixed point, and therefore the metric is not pointwise
null Jordan Osserman.
\gronkRMKMOREJO

\item The metric of Theorem \ref{thm-1.3} is timelike and spacelike nilpotent Szab\'o, it is not symmetric, and it is not Jordan Szab\'o.
\gronkSZABO

\item A straightforward calculation shows that metric of Theorem \ref{thm-1.3} is not Ivanov--Petrova.
\end{enumerate}\end{remark}

\end{document}